\title[Global existence of weak solutions to the Navier-Stokes equations]{Global existence of weak solutions to the Navier-Stokes equations with temperature-depending viscosity coefficient}
\author[Yu]{Cheng Yu}
\address[Cheng Yu]{\newline \newline Department of Mathematics, University of Florida, Gainesville, FL 32611, United States of America}
\email{chengyu@ufl.edu}
\author[Zuo]{Bijun Zuo}
\address[Bijun Zuo]{\newline \newline Institute of Applied Mathematics, Academy of Mathematics and Systems Science, Chinese Academy of Sciences, Beijing 100190, China
\newline and School of Mathematical Sciences, University of Chinese Academy of Sciences, Beijing 100049, China}
\email{bjzuo@amss.ac.cn}
\thanks{\textbf{Acknowledgment.} 
Cheng Yu is partially supported by Collaboration Grants for Mathematicians from Simons Foundation.
}
\newtheorem{theorem}{Theorem}[section]
\newtheorem{lemma}{Lemma}[section]
\newtheorem{proposition}{Proposition}[section]
\newtheorem{remark}{Remark}[section]
\newtheorem{definition}{Definition}[section]
\def\charf {\mbox{{\text 1}\kern-.30em {\text l}}}
\begin{document}

\date{\today}

  %
  %

\begin{abstract}
In this paper, the initial-boundary value problem to the three-dimensional inhomogeneous, incompressible and heat-conducting Navier-Stokes equations with temperature-depending viscosity coefficient is considered in a bounded domain. The viscosity coefficient is degenerate and may vanish in the region of absolutely zero temperature. Global existence of weak solutions to such a system is established for the large initial data. The proof is based on a three-level approximate scheme, the De Giorgi's method and compactness  arguments.
\end{abstract}

\maketitle \centerline{\date}
%
%
\section{Introduction and Main Result}
\setcounter{equation}{0}
In the present paper, we consider the following three-dimensional inhomogeneous, incompressible Navier-Stokes equations with the temperature effects in $(0,T)\times\Omega$:
\begin{subequations}\label{1.1}
\begin{empheq}{align}
&\partial_t\varrho+{\rm div}(\varrho\mathbf{u})=0,\\
&{\rm div}\mathbf{u}=0,\\
&\partial_t(\varrho\mathbf{u})+{\rm div}(\varrho\mathbf{u}\otimes\mathbf{u})+\nabla P-{\rm div}\mathbb{S}=0,\\
&\partial_t(\varrho\vartheta)+{\rm div}(\varrho\mathbf{u}\vartheta)+{\rm div}\mathbf{q}=\mathbb{S}:\nabla\mathbf{u},
\end{empheq}
\end{subequations}
together with the initial conditions
\begin{equation}\label{1.5}
(\varrho,\varrho\mathbf{u},\vartheta)(0,x)=(\varrho_0,\mathbf{m}_0,\vartheta_0)(x)\quad{\rm in} \,\,\Omega,
\end{equation}
and the boundary conditions
\begin{equation}\label{1.6}
\mathbf{u}(t,x)=0,\quad\nabla\vartheta(t,x)\cdot\mathbf{n}(x)=0\quad{\rm on} \,\,[0,T]\times\partial\Omega.
\end{equation}
Here $\Omega\subset \mathbb{R}^3$ is a bounded domain of class $C^{2+\nu}$ with $\nu>0$,  $\mathbf{n}(x)$ is the unit outward normal vector to the boundary at $x\in\partial\Omega$, $\varrho=\varrho(t,x)$ is the density of the fluid, $\mathbf{u}=\mathbf{u}(t,x)$ is the velocity field, $\vartheta=\vartheta(t,x)$ is the temperature, $P=P(t,x)$ is the pressure, $\mathbb{S}$ denotes the viscous stress tensor given by
\begin{equation}\label{1-01}
\mathbb{S}=\mu(\vartheta)(\nabla\mathbf{u}+\nabla^{\mathbf{t}}\mathbf{u}),
\end{equation}
where $\nabla^{\mathbf{t}}\mathbf{u}$ is the transposition of $\nabla\mathbf{u}$ and $\mu(\vartheta)\geq0$ is the viscosity coefficient which depends on the temperature and may degenerate in the region of absolutely zero temperature,
and $\mathbf{q}$ denotes the heat flux of the fluid satisfying the Fourier's law
\begin{equation}\label{1-02}
\mathbf{q}=-\kappa(\vartheta)\nabla\vartheta,
\end{equation}
with $\kappa(\vartheta)>0$ being the heat conductivity coefficient depending on the fluid temperature.

\

The existence of weak solutions globally in time for the incompressible Navier-Stokes equations was first established by  Leray \cite{Leray 1934} in 1934. The notion of weak solutions was introduced in this work, and it preceded both the introduction of the Sobolev spaces in 1936 and the generalized derivatives by Schwartz in 1944. A comparable theory for the compressible Navier-Stokes equations was developed by Lions  \cite{Lions 1998 Compressible models} where the author adopted the concept of renormalized solution developed in the framework of a joint work with DiPerna \cite{DiPerna and Lions}. In particular, Lions \cite{Lions 1998 Compressible models} proved the global existence of weak solutions with large initial data for $\gamma\geq3/2$ in two dimensions and for $\gamma\geq9/5$ in three dimensions, which was then extended to $\gamma>1$ for spherically symmetric case by Jiang and Zhang \cite{Jiang Song and Zhang Ping} and $\gamma>3/2$ for general three-dimensional case by Feireisl, Novotn\'{y} and Petzeltov\'{a} \cite{Feireisl 2001}. Furthermore, Feireisl \cite{Feireisl Dynamics 2004} extended such a result to the full compressible Navier-Stokes equations by the concept of variational solutions. By generalizing the renormalized techniques, the existence of global solutions to the compressible Navier-Stokes equations with degenerate viscosity coefficients was established in \cite{BVY, Li and Xin, Vasseur and Yu} and the existence of global solutions to the quantum Navier-Stokes equations \cite{LV}. Meanwhile, we also mention the existence of global weak or strong  solutions to the compressible Navier-Stokes equations in the absence of the vacuum states (see
\cite{Hoff 1987, Kazhikhov and Shelukhin, Serre}) and the global existence of spherically symmetric weak solutions in \cite{{Ducomet 2011}, {Guo 2008}, {Hoff 1992}}.

\

When the temperature effects are considered for the incompressible Navier-Stokes equations,
Lions \cite{Lions 1996 Incompressible models} first established the existence of global weak solutions with constant viscosity and heat conductivity coefficients by two different approaches.
Later, for the case that the viscosity
and heat conductivity coefficients depend on the temperature but they have the below positive bounds, Feireisl and M\'{a}lek \cite{Feireisl on the Navier-Stokes 2006} proved the global existence of weak solutions satisfying the total energy equation in a periodic domain, which was then extended to the general domain case by Bul\'{i}\v{c}ek, Feireisl and M\'{a}lek \cite{Feireisl a Navier-Stokes-Fourier system 2009}.
The existence of global weak solutions to \eqref{1.1}-\eqref{1.6} was proposed by Lions in  \cite{Lions 1996 Incompressible models}
where $\mu(s)\in C(\mathbb{R})$ and $\inf \{\mu(s), s\in \mathbb{R}\}>0$.
In particular, Lions pointed out that it is possible to study such a problem by the methods developed in Chapter 2 and Chapter 3 of his book \cite{Lions 1996 Incompressible models}. Essentially, this viscosity coefficient is bounded below from zero. This can provide uniform bounds for the compactness analysis.

\

As mentioned in \cite{Ba}, zero viscosity only occurs when the temperature is very low in superfluids.
Otherwise, by the second law of thermodynamics, the viscosity of all fluids is positive.
Thus, mathematically, it is natural to consider that the viscosity coefficient degenerates at very low absolute temperature, which means that the restrictions imposed on the viscosity coefficient as in \eqref{1.32} are reasonable.
Due to the lack of a positive bound from below for the viscosity coefficient,
 the uniform bounds cannot be obtained by the direct energy method, which is the first difficulty for  constructing a global weak solution to \eqref{1.1}-\eqref{1.6}.

\

The crucial point in obtaining uniform bounds is to show that the temperature is bounded away from zero by De Giorgi's method \cite{De}, which was originally developed by De Giorgi for the regularity of elliptic equations with discontinuous coefficients. Inspired by \cite{De}, Mellet and Vasseur \cite{Vasseur temperature} gave a positive bound from below for the temperature in compressible Navier-Stokes equations when the viscosity is strictly positive, and later for more general case by Baer and Vasseur \cite{BV}. Those results can be viewed as a priori estimates for the temperature and motivate us to show that the temperature is uniformly bounded away from zero for the approximated solutions. In fact, we have to add one term $\varepsilon\Delta \mathbf{u}$ in the moment equation so that we can construct approximated solutions which meet a higher regularity for the velocity field $\mathbf{u}$.
In particular, at this level approximation,
 the density, velocity and temperature fields $(\rho,\mathbf{u},\vartheta)$ satisfy a renormalized  temperature inequality \eqref{2.8} in the sense of distributions. Then, we can apply De Giorgi's method to this inequality to obtain a positive bound from below for the temperature, which further yields the lower bound for the viscosity coefficient $\mu(\vartheta)$, that is, $\mu(\vartheta)\geq \underline{\mu}>0$ for some positive constant $\underline{\mu}$. Then the uniform $H^1$-regularity of the velocity field $\mathbf{u}$ could be derived by the elementary energy inequality.

\

Another difficulty is how to handle the temperature equation.
As mentioned in \cite{{Feireisl Dynamics 2004}, {Feireisl On the motion 2004}}, one of the difficulties is the temperature concentration, which can be resolved by the renormalization of the temperature equation (\ref{1.1}d). To be specific, multiplying (\ref{1.1}d) by $h(\vartheta)$ for some suitable function $h$, we obtain
\begin{equation}\label{1-1}
\begin{split}
\partial_t(\varrho H(\vartheta))+{\rm div}(\varrho\mathbf{u}H(\vartheta))-\triangle\mathcal{K}_h(\vartheta)
=h(\vartheta)\mathbb{S}:\nabla\mathbf{u}-h'(\vartheta)\kappa(\vartheta)|\nabla\vartheta|^2,
\end{split}
\end{equation}
where
\begin{equation*}
H(\vartheta)=\int_0^\vartheta h(z) dz,\quad\quad \mathcal{K}_h(\vartheta)=\int_0^\vartheta \kappa(z)h(z) dz.
\end{equation*}
The idea of renormalization is inspired by DiPerna and Lions in \cite{DiPerna and Lions} and was used by Lions
\cite{Lions 1996 Incompressible models} and Feireisl \cite{Feireisl Dynamics 2004} to overcome the temperature concentration.  But, we are not able to recover (\ref{1.1}d) by passing to the limit  due to the regularity of velocity, instead of an inequality. This is inspired by the notation of variational solutions of Feireisl \cite{Feireisl Dynamics 2004}.

\

The weak solutions to the initial-boundary value problem \eqref{1.1}-\eqref{1.6} are defined in the following sense:

\begin{definition}\label{1.1.}
We call $(\varrho, \mathbf{u}, \vartheta, P)$ a weak solution to the initial-boundary value problem \eqref{1.1}-\eqref{1.6} if
\begin{enumerate}[(i).]

\item the density $\varrho\geq0$ and the velocity $\mathbf{u}$ satisfy
\begin{equation*}
\begin{split}
&\varrho\in L^\infty((0,T)\times\Omega)\cap C([0,T];L^p(\Omega)),\, 1\leq p<\infty,\\
&\mathbf{u}\in L^2(0,T; H^1_0(\Omega)),\quad\sqrt{\varrho}\mathbf{u}\in L^\infty(0,T; L^2(\Omega)),
\end{split}
\end{equation*}
and the temperature $\vartheta\geq 0$ satisfies
\begin{equation*}
\vartheta \in L^2(0,T; H^1(\Omega)),\quad \varrho\vartheta\in L^\infty(0,T; L^1(\Omega));
\end{equation*}

\item the equations (\ref{1.1}a)-(\ref{1.1}c) hold in $\mathcal{D}'((0,T)\times\Omega)$, that is, for any $\Phi\in C^\infty_c((0,T)\times\Omega)$ satisfying ${\rm div}\Phi=0$, we have
\begin{equation}\label{1.12}
\int_0^T\int_\Omega \varrho \partial_t\Phi dxdt+\int_0^T\int_\Omega \varrho\mathbf{u}\cdot\nabla \Phi dxdt=0,
\end{equation}
\begin{equation}\label{1.14}
\begin{split}
&\int_0^T\int_\Omega \varrho\mathbf{u}\cdot \partial_t\Phi dxdt+\int_0^T\int_\Omega \varrho\mathbf{u}\otimes\mathbf{u}:\nabla\Phi dxdt
=\int_0^T\int_\Omega \mathbb{S}:\nabla \Phi  dxdt,
\end{split}
\end{equation}
 and for any $\eta\in C^\infty_c(\Omega)$, it holds that for a.e. $t\in (0,T)$
 \begin{equation*}
 \int_\Omega \mathbf{u}\cdot\nabla\eta dx=0;
 \end{equation*}


\item  $\vartheta(t,\cdot)\rightarrow \vartheta_{0}$,\,\,$\varrho\mathbf{u}(t,\cdot)\rightarrow \mathbf{m}_{0}$ in $\mathcal{D}'(\Omega)$ as $t\rightarrow 0^+$, that is, for any $\chi\in C_c^\infty(\Omega)$, it holds
\begin{equation*}
\begin{split}
&\lim_{t\rightarrow 0^+}\int_{\Omega}\vartheta(t,x)\chi(x) dx=\int_{\Omega}\vartheta_0(x)\chi(x)dx,\\
&\lim_{t\rightarrow 0^+}\int_{\Omega}\varrho\mathbf{u}(t,x)\chi(x) dx=\int_{\Omega}\mathbf{m}_0(x)\chi(x)dx;
\end{split}
\end{equation*}

\item the following temperature inequality holds
\begin{equation}\label{1.16}
\begin{split}
&\int_0^T\int_\Omega \varrho\vartheta\partial_t\varphi dxdt+\int_0^T\int_\Omega \left(\varrho\mathbf{u}\vartheta\cdot\nabla\varphi
+\mathcal{K}(\vartheta)\triangle\varphi\right) dxdt\\
&\leq -\int_0^T\int_\Omega \mathbb{S}:\nabla\mathbf{u}\varphi dxdt
-\int_\Omega \varrho_0\vartheta_0\varphi(0),
\end{split}
\end{equation}
where $\varrho\mathbb{S}=\varrho\mu(\vartheta)(\nabla\mathbf{u}+\nabla^{\mathbf{t}}\mathbf{u})$ and
$\mathcal{K}(\vartheta)=\int_0^\vartheta \kappa(z) dz$,
for any $\varphi\in C_c^\infty([0,T]\times\Omega)$ satisfying
\begin{equation}\label{1.17}
\varphi\geq 0,\,\,\varphi(T,\cdot)=0,\,\,\nabla\varphi\cdot \mathbf{n}|_{\partial\Omega}=0;
\end{equation}

\item the energy inequality holds, that is, for a.e. $t\in(0,T)$,
\begin{equation}\label{1.18}
E[\varrho, \mathbf{u}, \vartheta](t)\leq E[\varrho, \mathbf{u}, \vartheta](0),
\end{equation}
where
\begin{equation}\label{1.180}
E[\varrho, \mathbf{u}, \vartheta](t)=\int_\Omega \left(\frac{1}{2}\varrho|\mathbf{u}|^2+\varrho\vartheta\right)(t) dx,
\end{equation}
and
\begin{equation}\label{1-2}
E[\varrho, \mathbf{u}, \vartheta](0)=\int_\Omega \left(\frac{1}{2}\frac{|\mathbf{m}_0|^2}{\varrho_0}+\varrho_0\vartheta_0\right)dx.
\end{equation}
\end{enumerate}
\end{definition}

\

\begin{remark}\label{1.4.}
As mentioned in \cite{Feireisl on the Navier-Stokes 2006}, the reason
for introducing the function $\mathcal{K}(\vartheta)=\int_0^\vartheta \kappa(z) dz$
with $\nabla\mathcal{K}(\vartheta)=\kappa(\vartheta)\nabla\vartheta=-\mathbf{q}$
is that we are unable to deduce $\kappa(\vartheta)\nabla\vartheta$ is
locally integrable by a priori estimates that we can obtain. However, we can deduce $\mathcal{K}(\vartheta)\in L^1((0,T)\times\Omega)$ by
constructing proper approximate equations and requiring suitable growth restrictions on $\kappa(\vartheta)$.
\end{remark}

\begin{remark}\label{1.3.}
As shown later, the bounds on the velocity fail to ensure the convergence of the term $\mathbb{S}:\nabla\mathbf{u}$ in the sense of distributions. Therefore, as in \cite{Feireisl Dynamics 2004, Feireisl On the motion 2004}, we replaced the temperature equation (\ref{1.1}d) by the following two inequalities in Definition \ref{1.1.}
\begin{equation}\label{1.19}
\partial_t(\varrho\vartheta)+{\rm div}(\varrho\mathbf{u}\vartheta)-\triangle\mathcal{K}(\vartheta)\geq\mathbb{S}:\nabla\mathbf{u},
\end{equation}
and
\begin{equation}\label{1.20}
E[\varrho, \mathbf{u}, \vartheta](t)\leq E[\varrho, \mathbf{u}, \vartheta](0).
\end{equation}
\end{remark}

\

Our main result can be stated as follows.
\begin{theorem}\label{1.2.}
Let $\Omega\subset \mathbb{R}^3$ be a bounded domain of class $C^{2+\nu}$, with $\nu>0$. Assume that
\begin{enumerate}[(i).]
\item the heat conductivity coefficient $\kappa(\vartheta)\in C^1([0,\infty))$ satisfies
\begin{equation}\label{1.33}
\underline{\kappa}(1+\vartheta^2)\leq\kappa(\vartheta)\leq\overline{\kappa}(1+\vartheta^2),
\end{equation}
for constants $\underline{\kappa}>0$ and $\overline{\kappa}>0$;

\item the viscosity coefficient $\mu(\vartheta)$ is globally Lipschitz continuous on $[0,\infty)$ and it is a positive function on $[\bar{\vartheta},\infty)$, satisfying
\begin{equation}\label{1.32}
\lim_{\vartheta\to\infty}\mu(\vartheta)>0,\,\,\text{ and }\,\,
 \mu(\vartheta)\geq \kappa \vartheta,\;\;\text{ for } 0\leq \vartheta\leq \bar{\vartheta};
\end{equation}

\item the initial data satisfy
\begin{equation}\label{1.30}
\begin{cases}
\varrho_0\in L^\infty(\Omega), \,\, \varrho_0\geq 0 \,\,&{\rm on}\,\,\Omega,\\
\vartheta_0\in L^1(\Omega),\,\,\vartheta_0\geq\underline{\vartheta}>0 \,\,& {\rm on}\,\,\Omega,\\
\frac{|\mathbf{m}_0|^2}{\varrho_0}\in L^1(\Omega).
\end{cases}
\end{equation}
\end{enumerate}
Then, for any given $T>0$, the initial-boundary value problem \eqref{1.1}-\eqref{1.6} admits a global weak solution $(\varrho,\mathbf{u},\vartheta, P)$ in the sense of Definition \ref{1.1.}.
\end{theorem}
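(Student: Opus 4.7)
The plan is to follow a three-layer approximation scheme as advertised in the introduction, the outer layer being a vanishing viscosity parameter $\varepsilon$ that adds $\varepsilon\Delta\mathbf{u}$ (with slip-like boundary conditions compatible with an energy identity) to the momentum equation so that $\mathbf{u}$ lives in $L^2(0,T;H^2)$, the middle layer being a truncation/cut-off of $\mu(\vartheta)$ (replace $\mu$ by $\mu+\delta$, say) and of the convective terms, and the inner layer being a Galerkin (or Faedo-Galerkin) scheme in $\mathbf{u}$ coupled to a classical parabolic problem for $\vartheta$ with nondegenerate $\kappa$. At the innermost level the continuity equation is solved by DiPerna-Lions theory once $\mathbf{u}$ is smooth enough, and the temperature equation is a strictly parabolic Neumann problem producing a classical nonnegative $\vartheta$. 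Because the initial temperature is bounded below by $\underline{\vartheta}>0$, a maximum-principle-type argument on the renormalized inequality \eqref{1-1} with a De Giorgi truncation $h(\vartheta)=-(k-\vartheta)_+$ will give the essential lower bound $\vartheta\geq\underline{\vartheta}_0>0$ uniformly in the regularization parameters.

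Once this lower bound is in hand, hypothesis \eqref{1.32} yields $\mu(\vartheta)\geq\underline{\mu}>0$ uniformly, so the standard energy inequality coming from testing the momentum equation with $\mathbf{u}$ gives $\mathbf{u}\in L^2(0,T;H^1_0)$ and $\sqrt{\varrho}\mathbf{u}\in L^\infty(0,T;L^2)$ independently of the approximation, while multiplying (\ref{1.1}d) by a cut-off and using \eqref{1.33} produces the standard bounds on $\vartheta$ in $L^2(0,T;H^1)$ together with $\varrho\vartheta\in L^\infty(0,T;L^1)$ and $\mathcal{K}(\vartheta)\in L^1$. These, combined with Aubin-Lions, allow to pass to the limit in the continuity equation (strong convergence of $\varrho$ in $C([0,T];L^p)$ thanks to renormalization as in Lions), and in the momentum equation via strong convergence of $\sqrt{\varrho}\mathbf{u}$ in $L^2$; the limit of $\mu(\vartheta^n)(\nabla\mathbf{u}^n+\nabla^{\mathbf t}\mathbf{u}^n)$ is identified since $\vartheta^n\to\vartheta$ strongly (using the parabolic compactness coming from $\mathcal{K}(\vartheta)$) and $\mu$ is Lipschitz and bounded below.

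For the temperature we only aim at the pair of inequalities \eqref{1.19}-\eqref{1.20}, following Feireisl's variational-solution viewpoint: the renormalized identity \eqref{1-1} passes to the limit as an inequality because weak convergence of $\nabla\mathbf{u}^n\rightharpoonup\nabla\mathbf{u}$ combined with the convex functional $\int\mu(\vartheta)|\nabla\mathbf{u}+\nabla^{\mathbf t}\mathbf{u}|^2$ gives, via Mazur/weak lower semicontinuity, only $\liminf_n \mathbb S^n:\nabla\mathbf{u}^n\geq \mathbb S:\nabla\mathbf{u}$ when tested against nonnegative $\varphi$; simultaneously the total energy inequality \eqref{1.18} follows by passing to the limit in the sum of kinetic and thermal energies, which is conserved at each approximate level. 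The initial trace conditions on $\varrho\mathbf{u}$ and $\vartheta$ are recovered from equicontinuity in time provided by the equations themselves.

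The main obstacle is the De Giorgi step at the approximate level: one must show that the renormalized temperature inequality \eqref{2.8} admits a uniform positive infimum for $\vartheta$ that does not depend on $\varepsilon$, $\delta$, or the Galerkin dimension. The difficulty is that the dissipative term $h(\vartheta)\mathbb S:\nabla\mathbf{u}$ has the wrong sign when $h(\vartheta)=-(k-\vartheta)_+$ with $k$ small, so one needs the coercive term $-h'(\vartheta)\kappa(\vartheta)|\nabla\vartheta|^2$ and the lower bound $\mu(\vartheta)\geq\kappa\vartheta$ from \eqref{1.32} to absorb the bad contribution near $\{\vartheta<k\}$; then a level-set energy inequality \`a la De Giorgi on the sequence $k_n=\underline{\vartheta}/2+2^{-n}$ yields the required lower bound. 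Once this is established the rest of the compactness analysis proceeds by now-standard arguments, but this single step is what makes a degenerate $\mu$ tractable at all.
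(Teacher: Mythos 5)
Your overall architecture (Galerkin inside $\varepsilon\Delta\mathbf{u}$ inside a $\delta$-regularization, De Giorgi lower bound on $\vartheta$, renormalized temperature inequality \`a la Feireisl) matches the paper's plan, but the step you single out as the crux is, as written, based on a sign error, and the role you assign to hypothesis \eqref{1.32} is the opposite of its actual role. The viscous dissipation $\mathbb{S}:\nabla\mathbf{u}=2\mu(\vartheta)|D(\mathbf{u})|^2\geq0$ is a \emph{heat source}, and with a correctly signed truncation (a nonnegative, non-increasing multiplier $h$, so the renormalizer $H=\int_0^\vartheta h$ is concave) the term $h(\vartheta)\mathbb{S}:\nabla\mathbf{u}$ appears with a \emph{favorable} sign in the renormalized inequality \eqref{2.8}: in the De Giorgi energy inequality \eqref{6.8} the contribution $-2(1-\delta)\int\mu(\vartheta_\varepsilon)|D(\mathbf{u}_\varepsilon)|^2\phi'(\vartheta_\varepsilon)$ is nonnegative (since $\phi'\leq0$) and simply sits in the good quantity $U_{k,\omega}$ of \eqref{6.13}. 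There is nothing to absorb, and in particular $\mu(\vartheta)\geq\kappa\vartheta$ is never used in the De Giorgi iteration --- the iteration controls the only problematic source, the regularizing term $\delta\vartheta^3\phi'(\vartheta_\varepsilon)$, by Lemma \ref{6.2.}. The hypothesis \eqref{1.32} enters only \emph{afterwards}: once Proposition \ref{6.1.} gives $\vartheta_\varepsilon\geq\widetilde{\vartheta}>0$ uniformly, \eqref{1.32} together with the global Lipschitz property and $\liminf_{\vartheta\to\infty}\mu>0$ yields $\mu(\vartheta_\varepsilon)\geq\underline{\mu}>0$ (see \eqref{6.50}), and only then does the energy inequality furnish the $L^2(H^1_0)$ bound on $\mathbf{u}_\varepsilon$ uniformly in $\varepsilon,\delta$. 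Misplacing where \eqref{1.32} is used is more than a slip, because it misidentifies the actual reason a degenerate viscosity is tractable.

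Two further gaps. First, with the Stampacchia truncation $H(\vartheta)=-(k-\vartheta)_+$ (or $-\tfrac12(k-\vartheta)_+^2$) you only obtain $\varrho(k-\vartheta)_+=0$, i.e.\ $\vartheta\geq k$ on $\{\varrho>0\}$; vacuum regions are unconstrained. The paper neutralizes this precisely by the $\delta$-regularization of the temperature equation, writing $(\delta+\varrho)H(\vartheta)$ and $(\delta+\varrho_{0,\delta})H(\vartheta_{0,\delta})$ in \eqref{2.8} and defining $U_{k,\omega}$ with $(\delta+\varrho_\varepsilon)$, so that the De Giorgi conclusion forces $\phi_{k,\omega}(\vartheta_\varepsilon)=0$ a.e.\ in $\Omega$, not merely where $\varrho_\varepsilon>0$. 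Your proposal puts $\delta$ into $\mu\mapsto\mu+\delta$ instead, which is redundant with $\varepsilon\Delta\mathbf{u}$ for coercivity and does nothing for the vacuum problem or for the integrability of $\vartheta$ (the paper's $\delta\vartheta^3$ term yields the crucial $L^3$ bound \eqref{4.40}). Second, the hypotheses on the renormalizer in \eqref{2.8} (smoothness, non-increasing, $h''h\geq2(h')^2$) are not satisfied by the linear truncation $1_{\{\vartheta<k\}}$; this is why the paper works with the logarithmic family $\phi_{k,\omega}(\vartheta)=[\ln(C_k/(\vartheta+\omega))]_+$, $C_k=e^{-M(1-2^{-k})}$, coming from the admissible choice $h(z)=\frac{1}{z+\omega}1_{\{z+\omega\leq C\}}$ (for which $h''h=2(h')^2$ exactly). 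Your additive level sequence would need at least an approximation/mollification argument to be used inside the renormalized inequality as stated.

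The high-level strategy is right; the execution of the De Giorgi step, which you correctly flag as the crucial one, needs these corrections to be viable.
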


\begin{remark}
Zero viscosity only occurs when the temperature is very low in superfluids.
Otherwise, by the second law of thermodynamics, the viscosity of all fluids is positive.
Thus, our restriction \eqref{1.32} is physical.
\end{remark}

The proof of Theorem \ref{1.2.}
will be divided into the following three steps:

Step 1: For fixed $\varepsilon$, $\delta>0$, we solve the approximate system \eqref{1.50}-\eqref{1.55} by the Galerkin method, to be specific, we first solve the problem in a suitable finite dimensional space $X_n$, then recover a global solution by passing to the limit as $n\rightarrow\infty$.

Step 2: For fixed $\delta>0$, letting $\varepsilon\rightarrow 0$ to eliminate the artificial viscosity in the momentum equation (\ref{1.50}c). To this end, the crucial point is to obtain a below bound for the temperature by De Giorgi's method, which is motivated by the work of Mellet, Vasseur \cite{Vasseur temperature}. This implies that the temperature-depending viscosity coefficient is bounded away from zero by the assumptions in Theorem \ref{1.2.}. Thus, one obtains the uniform bounds for the velocity for passing to the limits.
Note that, the below bound of temperature is uniform in $\delta$. So the uniform bounds are also available for the limits as $\delta$ goes to zero.

Step 3: We are able to recover a globally defined weak solution to the initial-boundary value problem \eqref{1.1}-\eqref{1.6} by letting $\delta\to 0$.

\

Our paper is organized as follows.
In Section 2, we construct a suitable approximate system \eqref{1.50}-\eqref{1.55}
and obtain the global solvability by the Galerkin method.
In Section 3, we perform the limit $\varepsilon\rightarrow 0$ to eliminate the artificial viscosity.
In Section 4, we let $\delta\rightarrow 0$ to finish the proof of Theorem \ref{1.2.}.

\

\section{The construction of approximate solutions}
\setcounter{equation}{0}
First, we construct the following approximate system in $(0,T)\times\Omega$:
\begin{subequations}\label{1.50}
\begin{empheq}{align}
&\partial_t\varrho+{\rm div}(\varrho\mathbf{u})=0,\\
&{\rm div}\mathbf{u}=0,\\
&\partial_t(\varrho\mathbf{u})+{\rm div}(\varrho\mathbf{u}\otimes\mathbf{u})+\nabla P-{\rm div}\mathbb{S}-\varepsilon\triangle\mathbf{u}=0,\\
&\partial_t((\delta+\varrho)\vartheta)+{\rm div}(\varrho\mathbf{u}\vartheta)-\triangle \mathcal{K}(\vartheta)
+\delta \vartheta^3=(1-\delta)\mathbb{S}:\nabla\mathbf{u},
\end{empheq}
\end{subequations}
where both $\varepsilon$ and $\delta$ are positive parameters,
supplemented with the initial conditions
\begin{equation}\label{1.54}
(\varrho,\varrho\mathbf{u},\vartheta)(0,x)
=(\varrho_ {0,\delta},\mathbf{m}_{0,\delta},\vartheta_{0,\delta})(x) \quad{\rm in}\,\,\Omega,
\end{equation}
and the boundary conditions
\begin{equation}\label{1.55}
\mathbf{u}(t,x)=0,\quad\nabla\vartheta(t,x)\cdot\mathbf{n}(x)=0\quad\;{\rm on} \,\,{[0,T]\times\partial\Omega}.
\end{equation}

Note that the construction of the above approximate system is motivated but different from
\cite{Feireisl 2001}-\cite{Feireisl Singular}.
Moreover, the regularized initial data are required to satisfy the following conditions:
\begin{equation}\label{1.56}
\begin{cases}
&\varrho_{0,\delta}\in C^{2+\nu}(\bar{\Omega}),\quad 0<\delta\leq\varrho_{0,\delta}(x)\leq \bar{\varrho};\\
&\varrho_{0,\delta}\rightarrow\varrho_{0}\,\,{\rm in}\,\,L^2(\Omega),\quad
|\{x\in\Omega\,|\,\varrho_{0,\delta}(x)<\varrho_0(x)\}|\rightarrow 0,\,\,{\rm as}\,\,\delta\rightarrow 0;\\
&\vartheta_{0,\delta}\in C^{2+\nu}(\bar{\Omega}),\quad \nabla\vartheta_{0,\delta}\cdot\mathbf{n}|_{\partial\Omega}=0,\quad
0<\underline{\vartheta}\leq\vartheta_{0,\delta};\\
&\vartheta_{0,\delta}\rightarrow\vartheta_{0}\,\,{\rm in}\,\,L^2(\Omega),\,\, {\rm as}\,\,\delta\rightarrow 0;\\
&\mathbf{m}_{0,\delta}=
\begin{cases}
\mathbf{m}_0,\,\,& {\rm if}\,\,\varrho_{0,\delta}\geq\varrho_0,\\
0,\,\,& {\rm if}\,\,\varrho_{0,\delta}<\varrho_0,
\end{cases}
\end{cases}
\end{equation}
where $\bar{\varrho}$ and $\underline{\vartheta}$ are independent of $\delta>0$.
In particular, the regularized initial value of the total energy
\begin{equation}\label{initial}
E_\delta(0)=\int_\Omega \left(\frac{1}{2}\frac{|\mathbf{m}_{0,\delta}|^2}{\varrho_{0,\delta}}
+(\delta+\varrho_{0,\delta})\vartheta_{0,\delta}\right)dx
\end{equation}
is bounded by a constant independent of $\delta>0$.

\

\begin{remark}\label{1.5.}
In equations (\ref{1.50}c) and (\ref{1.50}d), the quantities $\varepsilon$ and $\delta$ are small positive parameters. Roughly speaking, the extra term $\varepsilon\triangle\mathbf{u}$ represents the artificial viscosity which ensures the parabolic property of the momentum equation (\ref{1.50}c). The quantity $\delta\vartheta^3$ is introduced to improve the integrability of the temperature. The other terms related to the parameter $\delta>0$ are introduced to avoid technicalities in the temperature estimates.
\end{remark}

\

We give our result about the global solvability of the approximate problem \eqref{1.50}-\eqref{1.55} in the following proposition.

\begin{proposition}\label{2.1.}
For any fixed $\varepsilon$, $\delta>0$, under the hypotheses of Theorem \ref{1.2.} and the assumptions imposed on the initial data \eqref{1.56}, the approximate system \eqref{1.50}-\eqref{1.55} admits a global weak solution $(\varrho, \mathbf{u}, \vartheta, P)$ satisfying the following properties:
\begin{enumerate}[(i).]
\item the density $\varrho\geq 0$ and the velocity $\mathbf{u}$ satisfy
\begin{equation*}
\begin{split}
&\varrho\in L^\infty((0,T)\times\Omega)\cap C([0,T];L^p(\Omega)),\, 1\leq p<\infty,\\
&\mathbf{u}\in L^2(0,T; H^1_0(\Omega)),\quad\sqrt{\varrho}\mathbf{u}\in L^\infty(0,T; L^2(\Omega)),
\end{split}
\end{equation*}
and the temperature $\vartheta\geq 0$ satisfies
\begin{equation*}
\vartheta \in L^2(0,T; H^{1}(\Omega))\cap L^3((0,T)\times\Omega),\quad \varrho\vartheta\in L^\infty(0,T; L^1(\Omega));
\end{equation*}

\item the equations (\ref{1.50}a)-(\ref{1.50}c) hold in $\mathcal{D}'((0,T)\times\Omega)$;

\item  $\vartheta(t,\cdot)\rightarrow \vartheta_{0,\delta}$, $\varrho\mathbf{u}(t,\cdot)\rightarrow \mathbf{m}_{0,\delta}$ in $\mathcal{D}'(\Omega)$ as $t\rightarrow 0^+$;

\item the renormalized temperature inequality holds in the sense of distributions, that is,
\begin{equation}\label{2.8}
\begin{split}
&\int_0^T\int_\Omega (\delta+\varrho)H(\vartheta)\partial_t\varphi dxdt\\
&\quad+\int_0^T\int_\Omega \left(\varrho H(\vartheta)\mathbf{u}\cdot\nabla\varphi
+\mathcal{K}_h(\vartheta)\triangle\varphi-\delta\vartheta^3 h(\vartheta)\varphi\right) dxdt\\
&\leq\int_0^T\int_\Omega \left((\delta-1)\mathbb{S}:\nabla\mathbf{u}h(\vartheta)+h'(\vartheta)\kappa(\vartheta)|\nabla\vartheta|^2\right)\varphi dxdt\\
&\quad\quad-\int_\Omega(\delta+\varrho_{0,\delta})H(\vartheta_{0,\delta})\varphi(0)dx
\end{split}
\end{equation}
for any $\varphi\in C_c^\infty([0,T]\times\Omega)$ satisfying
\begin{equation}\label{2.9}
\varphi\geq 0,\,\,\varphi(T,\cdot)=0,\,\,\nabla\varphi\cdot \mathbf{n}|_{\partial\Omega}=0,
\end{equation}
where $H(\vartheta)=\int_0^\vartheta h(z) dz$ and $\mathcal{K}_h(\vartheta)=\int_0^\vartheta \kappa(z)h(z) dz$,
with the non-increasing $h\in C^2([0,\infty))$ satisfying
\begin{equation}\label{2.10}
0<h(0)<\infty,\,\,\lim_{z\rightarrow\infty}h(z)=0,\\
\end{equation}
and
\begin{equation}\label{2.01}
h''(z)h(z)\geq 2(h'(z))^2\,\,for \,\,all\,\,z\geq0;
\end{equation}

\item the energy inequality
\begin{equation}\label{2.11}
\begin{split}
&\int_0^T\int_\Omega(-\partial_t\psi)\left(\frac{1}{2}\varrho|\mathbf{u}|^2+(\delta+\varrho)\vartheta \right)dxdt\\
&\quad+\int_0^T\int_\Omega \psi\left(\delta\mathbb{S}:\nabla\mathbf{u}+\varepsilon|\nabla\mathbf{u}|^2+\delta \vartheta^3\right) dxdt\\
&\leq\int_\Omega \left(\frac{1}{2}\frac{|\mathbf{m}_{0}|^2}{\varrho_{0,\delta}}+(\delta+\varrho_{0,\delta})\vartheta_{0,\delta} \right)dx
\end{split}
\end{equation}
holds for any $\psi\in C^\infty([0,T])$ satisfying
\begin{equation}\label{2.1100}
\psi(0)=1,\quad \psi(T)=0,\quad \partial_t\psi\leq 0.
\end{equation}
\end{enumerate}
\end{proposition}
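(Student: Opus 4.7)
The proof plan is as follows. Following the Galerkin strategy outlined in Step 1, I would fix an $L^2$-orthonormal basis $\{w_k\}_{k\geq 1}$ of smooth divergence-free functions satisfying the no-slip condition (for instance the eigenfunctions of the Stokes operator), set $X_n=\mathrm{span}\{w_1,\dots,w_n\}$, and seek $\mathbf{u}_n\in C([0,T];X_n)$. For a candidate velocity $\mathbf{u}_n$, the transport equation \eqref{1.50}a is solved by the DiPerna--Lions theory (or classical characteristics, since $\mathbf{u}_n$ is spatially smooth), producing $\varrho_n$ with $\delta\leq\varrho_n\leq\bar\varrho$ and $\varrho_n\in C([0,T];L^p(\Omega))$; the temperature equation \eqref{1.50}d then becomes a uniformly parabolic quasilinear problem for $\vartheta_n$ (with strictly positive weight $\delta+\varrho_n\geq \delta$ and principal part bounded from below by $\underline\kappa>0$ thanks to \eqref{1.33}), so standard parabolic theory together with a maximum principle based on $\vartheta_{0,\delta}\geq\underline\vartheta$ gives a nonnegative solution $\vartheta_n$. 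Substituting $(\varrho_n,\vartheta_n)$ back into the Galerkin projection of \eqref{1.50}c yields an ODE system for the coefficients of $\mathbf{u}_n$, and Schauder's fixed point theorem, applied on a short time interval and then iterated with the aid of the approximate energy identity, produces a global-in-time solution at the Galerkin level.

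Next I would derive $n$-uniform estimates. Testing the projected momentum equation against $\mathbf{u}_n$, integrating \eqref{1.50}d over $\Omega$, and adding yields the approximate energy identity, from which $\sqrt{\varrho_n}\,\mathbf{u}_n\in L^\infty(0,T;L^2)$, $\sqrt\varepsilon\,\nabla\mathbf{u}_n\in L^2((0,T)\times\Omega)$, $(\delta+\varrho_n)\vartheta_n\in L^\infty(0,T;L^1)$, and $\delta\vartheta_n^3\in L^1((0,T)\times\Omega)$ follow. The bound \eqref{1.33} and the identity $\nabla\mathcal{K}(\vartheta_n)=\kappa(\vartheta_n)\nabla\vartheta_n$ then upgrade $\vartheta_n$ to $L^2(0,T;H^1(\Omega))\cap L^3((0,T)\times\Omega)$. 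Controlling $\partial_t\varrho_n$, $\partial_t(\varrho_n\mathbf{u}_n)$, and $\partial_t[(\delta+\varrho_n)\vartheta_n]$ directly from the equations, classical Aubin--Lions compactness delivers strong $L^p_tL^q_x$ convergence of subsequences of $\varrho_n$, $\mathbf{u}_n$, $\vartheta_n$, which is sufficient to pass to the limit in all nonlinear products of \eqref{1.50}a--\eqref{1.50}c and to recover the initial-trace conditions in $\mathcal{D}'(\Omega)$.

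The renormalized temperature inequality \eqref{2.8} is derived by multiplying \eqref{1.50}d at the Galerkin level by $h(\vartheta_n)$ and using \eqref{1.50}a to rewrite $\partial_t[(\delta+\varrho_n)H(\vartheta_n)]$; this produces an \emph{equality} of the same structural form as \eqref{2.8}, where the condition $h''h\geq 2(h')^2$ in \eqref{2.01} is precisely what is needed to handle the $\delta$-weighted terms rigorously via an approximation by smooth, non-degenerate $h$. Because $h\geq 0$, $h'\leq 0$, $\delta<1$, and $\mathbb{S}_n:\nabla\mathbf{u}_n\geq 0$, the two dissipative contributions $(1-\delta)\mathbb{S}_n:\nabla\mathbf{u}_n\,h(\vartheta_n)$ and $-h'(\vartheta_n)\kappa(\vartheta_n)|\nabla\vartheta_n|^2$ have definite signs, so testing against any $\varphi\geq 0$ satisfying \eqref{2.9} and applying weak lower semicontinuity (Fatou) converts the $n$-level equality into the distributional inequality \eqref{2.8}. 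The energy inequality \eqref{2.11} is recovered analogously, by testing the approximate energy identity against $\psi\geq 0$ with $\partial_t\psi\leq 0$ and applying lower semicontinuity to the dissipation terms $\varepsilon|\nabla\mathbf{u}_n|^2$, $\delta\mathbb{S}_n:\nabla\mathbf{u}_n$, and $\delta\vartheta_n^3$.

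The main obstacle will be the passage to the limit in the quadratic dissipation $\mathbb{S}_n:\nabla\mathbf{u}_n$ appearing in both \eqref{2.8} and \eqref{2.11}. Even with the artificial viscosity $\varepsilon\triangle\mathbf{u}$ in place, the Galerkin truncation and the temperature-dependent coefficient $\mu(\vartheta)$ prevent one from obtaining strong convergence of $\nabla\mathbf{u}_n$ in $L^2$, so an equality in the limit is out of reach; the favorable sign structure (the prefactor $\delta-1<0$, together with $h,\varphi\geq 0$) is indispensable for extracting an inequality via Fatou's lemma. This sign-tracking is exactly why the proposition asserts \eqref{2.8} and \eqref{2.11} as inequalities rather than equalities, and it is the step that dictates the structure of the subsequent $\varepsilon\to 0$ and $\delta\to 0$ limits in Sections~3 and~4.
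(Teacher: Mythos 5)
Your overall Galerkin architecture (characteristics for $\varrho_n$, parabolic theory for $\vartheta_n$, projected momentum equation plus fixed point, energy estimates, compactness, lower semicontinuity to pass from equality to inequality) is the same as the paper's. There is, however, one genuine gap in the mechanism for the critical step, and one misattribution that is more than cosmetic.

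You justify the passage $n\to\infty$ in the term $(1-\delta)\int\int h(\vartheta_n)\,\mathbb{S}_n:\nabla\mathbf{u}_n\,\varphi$ by ``weak lower semicontinuity (Fatou)''. Fatou does not apply here: it requires pointwise (or in-measure) convergence of the integrand, whereas $\nabla\mathbf{u}_n$ converges only weakly in $L^2$. The nonnegativity and the sign of $\delta-1$ are necessary but not sufficient; the actual tool is weak lower semicontinuity of the \emph{jointly convex} functional
\[
(\vartheta,\nabla\mathbf{u})\;\mapsto\; h(\vartheta)\,\mu(\vartheta)\,|\nabla\mathbf{u}+\nabla^{\mathbf t}\mathbf{u}|^2,
\]
where $\vartheta_n\to\vartheta$ strongly (hence a.e. along a subsequence) and $\nabla\mathbf{u}_n\rightharpoonup\nabla\mathbf{u}$ weakly. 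This is exactly the content of Lemma~\ref{5-1} in the paper (taken from Hu--Wang), which the paper quotes precisely because Fatou alone cannot close the argument. If you insist on a Fatou-type argument you would need strong $L^2$-convergence of $\nabla\mathbf{u}_n$, which does not hold at the Galerkin level.

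Closely related: you state that the hypothesis \eqref{2.01}, $h''h\geq 2(h')^2$, ``is precisely what is needed to handle the $\delta$-weighted terms rigorously.'' That is not its role. As Remark~\ref{1.6.} of the paper explains, \eqref{2.01} is the structural condition that makes the map above \emph{convex} in $(\vartheta,\nabla\mathbf{u})$, and hence weakly lower semicontinuous along the sequences available to you. The $\delta$-weighted terms ($\delta\vartheta_n^3 h(\vartheta_n)$, etc.) are handled by the growth/integrability estimates and a De la Vall\'ee Poussin-type argument (Proposition~2.1 of Feireisl's book, cited in the paper), not by \eqref{2.01}. Once you replace Fatou by the correct convexity/lower-semicontinuity lemma and reassign the role of \eqref{2.01}, the rest of your plan matches the paper's proof; the minor deviation of invoking Schauder instead of the contraction estimates \eqref{2.50}, \eqref{2.61} (which suggest Banach) is harmless.
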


\

\begin{remark}\label{1.6.}
As proved in \cite{Feireisl Dynamics 2004} for the constant viscosity coefficient case, the hypothesis \eqref{2.01} is imposed to ensure the convex and weakly lower semi-continuous property of the function
\begin{equation}\label{1-10}
(\vartheta, \nabla\mathbf{u})\mapsto h(\vartheta)\mathbb{S}:\nabla\mathbf{u},
\end{equation}
which is still valid for the temperature-depending viscosity coefficient case (cf. \cite{Xianpeng Hu and Dehua Wang}).
\end{remark}

\

\subsection{\bf Global solvability of the approximate system in a finite dimensional space}\

Let $\{\eta_n\}$ be a family of divergence-free linearly independent smooth vector functions and vanish on the boundary $\partial\Omega$.
Now, consider a sequence of finite dimensional spaces
\begin{equation}\label{2.29}
X_n={\rm span}\{\eta_1, \eta_2,...,\eta_n\}, \,n=1,2,\cdot\cdot\cdot.
\end{equation}

\

The global solvability of the approximate problem \eqref{1.50}-\eqref{1.55} in the finite dimensional space $X_n$ can be achieved by the following four steps:

Step 1: Given $\mathbf{u}=\mathbf{u}_n\in C([0,T]; X_n)$, the approximate continuity equation (\ref{1.50}a) can be seen as a transport equation of $\varrho$, which can be solved directly by the characteristics method. We denote the solution by $\varrho_n:=\varrho[\mathbf{u}_n]$ and give the details in Proposition \ref{2.2.}.

Step 2: Given $\mathbf{u}=\mathbf{u}_n$ and $\varrho=\varrho_n$, the approximate temperature equation (\ref{1.50}d) can be seen as a quasi-linear parabolic equation of $\vartheta$, which can be solved by applying the parabolic theory \cite{Ladyzhenskaya}. See details in Proposition \ref{2.3.}. Denote the solutions by $\vartheta_n:=\vartheta[\mathbf{u}_n]$.

Step 3: Substituting $\varrho=\varrho_n$ and $\vartheta=\vartheta_n$ into the following integral
equation
\begin{equation}\label{2.41}
\begin{split}
\int_\Omega (\varrho\mathbf{u}_n)(t)\cdot \eta dx-\int_\Omega \mathbf{m}_{0,\delta}\cdot \eta dx
=\int_0^t\int_\Omega \left(\varrho\mathbf{u}_n\otimes\mathbf{u}_n-\mathbb{S}_n-\varepsilon\nabla\mathbf{u}_n\right):\nabla\eta dxds,
\end{split}
\end{equation}
for any $\eta\in X_n$, with \[\mathbb{S}_n=\mu(\vartheta)(\nabla\mathbf{u}_n+\nabla^{\mathbf{t}}\mathbf{u}_n),\]
we can obtain a local solution $\mathbf{u}_n\in C([0,T_n]; X_n)$ with $T_n\leq T$ by the standard fixed point theorem.
See details in Proposition \ref{2.4.}.

Step 4: By virtue of some uniform (in time) estimates, we can extend $T_n$ to $T$ to obtain a global existence result.

\begin{remark}\label{2.10.}
Note that the integral equation \eqref{2.41} can be seen as a projection of the momentum equation (\ref{1.50}c) onto the finite dimensional space $X_n$ in the sense of distributions.
\end{remark}

\


Following the steps presented above, for fixed velocity field $\mathbf{u}=\mathbf{u}_n$, we first study the solvability of the approximate continuity equation (\ref{1.50}a).

\begin{proposition}\label{2.2.}
Let $\mathbf{u}=\mathbf{u}_n$ be a given vector function belonging to $C([0,T]; X_n)$. Assume that the initial data $\varrho_{0,\delta}$ satisfy the hypotheses in \eqref{1.56}.

Then there exists a mapping $\varrho_n=\varrho[\mathbf{u}_n]${\rm:}
\begin{equation*}
\varrho_n:\,C([0,T]; X_n)\rightarrow C([0,T];C^2(\bar{\Omega}))
\end{equation*}
having the following properties:
\begin{itemize}
\item the initial value problem (\ref{1.50}a), \eqref{1.54} possesses a unique classical solution $\varrho_n$;

\item $0<\delta\leq\varrho[\mathbf{u}_n](t,x)\leq\overline{\varrho}$\, for all $t\in[0,T]$;

\item continuity of the mapping:
\begin{equation}\label{2.50}
\left\|\varrho_{n_1}-\varrho_{n_2}\right\|_{C([0,T]; C^2(\bar{\Omega}))}
\leq CT\| \mathbf{u}_{n_1}-\mathbf{u}_{n_2}\|_{C([0,T]; X_n)}.
\end{equation}
\end{itemize}
\end{proposition}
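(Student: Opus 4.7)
The plan is to exploit the two structural facts that make (\ref{1.50}a) extremely tame: on $X_n$, the velocity $\mathbf{u}_n$ is smooth in $x$ and continuous in $t$, and ${\rm div}\,\mathbf{u}_n=0$ with $\mathbf{u}_n|_{\partial\Omega}=0$. Thus (\ref{1.50}a) reduces to the linear transport equation
\begin{equation*}
\partial_t\varrho+\mathbf{u}_n\cdot\nabla\varrho=0,\qquad \varrho(0,\cdot)=\varrho_{0,\delta},
\end{equation*}
which I would solve by the classical method of characteristics.

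First I would introduce the flow $X_n(t,x)$ defined by
\begin{equation*}
\tfrac{d}{dt}X_n(t,x)=\mathbf{u}_n(t,X_n(t,x)),\qquad X_n(0,x)=x.
\end{equation*}
Because every element of $X_n$ is $C^\infty(\overline{\Omega})$ and vanishes on $\partial\Omega$, the Picard--Lindel\"of theorem yields a unique global-in-time flow, and since the boundary is an invariant set for $\mathbf{u}_n$, $X_n(t,\cdot)$ maps $\overline\Omega$ onto itself as a $C^2$-diffeomorphism (in fact $C^\infty$), with $C^2$ norms of $X_n(t,\cdot)$ and its inverse controlled uniformly in $t\in[0,T]$ by $\|\mathbf{u}_n\|_{C([0,T];X_n)}$ via Gr\"onwall applied to the equations for $\nabla X_n$ and $\nabla^2 X_n$. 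The divergence-free condition implies $\det\nabla X_n(t,x)\equiv 1$, so the unique classical solution is
\begin{equation*}
\varrho_n(t,y)=\varrho_{0,\delta}(X_n^{-1}(t,y)).
\end{equation*}
The pointwise bound $\delta\le\varrho_n\le\overline{\varrho}$ is then immediate from \eqref{1.56}, and the $C([0,T];C^2(\overline{\Omega}))$ regularity follows from the chain rule together with the regularity of the flow and of $\varrho_{0,\delta}\in C^{2+\nu}(\overline{\Omega})$.

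For the Lipschitz estimate \eqref{2.50}, I would compare two flows $X_{n_1}$, $X_{n_2}$ generated by $\mathbf{u}_{n_1}$, $\mathbf{u}_{n_2}\in C([0,T];X_n)$. Subtracting the two ODEs, adding and subtracting $\mathbf{u}_{n_1}(t,X_{n_2})$, and using that $\mathbf{u}_{n_1}$ is Lipschitz in $x$ (with constant bounded by $\|\mathbf{u}_{n_1}\|_{C([0,T];X_n)}$ since all norms on $X_n$ are equivalent), a Gr\"onwall argument gives
\begin{equation*}
\|X_{n_1}(t,\cdot)-X_{n_2}(t,\cdot)\|_{C^2(\overline{\Omega})}\le Ct\,\|\mathbf{u}_{n_1}-\mathbf{u}_{n_2}\|_{C([0,T];X_n)},
\end{equation*}
and the same for the inverse flows, with $C$ depending on $n$, $T$, and an a priori bound on $\|\mathbf{u}_{n_j}\|_{C([0,T];X_n)}$. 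Composing with $\varrho_{0,\delta}\in C^{2+\nu}$ then yields \eqref{2.50}.

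The only mildly delicate point is the regularity of the flow near $\partial\Omega$: one must check that the $C^2$-norm of $X_n^{-1}(t,\cdot)$ stays bounded uniformly in $t\in[0,T]$, which is where smoothness of $\mathbf{u}_n$ on all of $\overline{\Omega}$ (together with $\mathbf{u}_n|_{\partial\Omega}=0$) and finite-dimensionality of $X_n$ are used in an essential way. Everything else is routine ODE theory, and no PDE-level estimate beyond the transport identity is required.
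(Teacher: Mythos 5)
Your approach is in spirit the same as the paper's: rewrite (\ref{1.50}a) as the linear transport equation $\partial_t\varrho+\mathbf{u}_n\cdot\nabla\varrho=0$ and solve by characteristics. However, you are considerably more careful than the paper. The paper's displayed formula \eqref{2.52}, $\varrho(t,x)=\varrho_{0,\delta}(x-\mathbf{u}_n t)$, is only correct when $\mathbf{u}_n$ is constant in both $x$ and $t$; for a general $\mathbf{u}_n\in C([0,T];X_n)$ the characteristic curves are not straight lines, and the correct representation is exactly the one you write, $\varrho_n(t,y)=\varrho_{0,\delta}(X_n^{-1}(t,y))$ via the flow map $X_n$. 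Your verification that $\overline{\Omega}$ is invariant (because $\mathbf{u}_n|_{\partial\Omega}=0$), that $\det\nabla X_n\equiv1$ by the divergence-free condition, and that the $C^2$ bounds and the Lipschitz dependence on $\mathbf{u}_n$ follow from Gr\"onwall applied to $\nabla X_n$, $\nabla^2 X_n$ and the difference of flows, supplies the details the paper's one-line ``by characteristics'' elides. So your proof is a corrected and fleshed-out version of the paper's sketch.

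One small caveat, which is inherited from the paper rather than introduced by you: the Lipschitz estimate \eqref{2.50} in the $C^2(\overline\Omega)$ norm requires differentiating $\varrho_{0,\delta}\circ X_n^{-1}$ twice and then subtracting. The term $\nabla^2\varrho_{0,\delta}(X_{n_1}^{-1})-\nabla^2\varrho_{0,\delta}(X_{n_2}^{-1})$ is only H\"older in $\|X_{n_1}^{-1}-X_{n_2}^{-1}\|$ when $\varrho_{0,\delta}\in C^{2+\nu}$, so as stated \eqref{2.50} holds linearly only up to the first derivatives, with a H\"older modulus $\sim T^{\nu}\|\mathbf{u}_{n_1}-\mathbf{u}_{n_2}\|^\nu$ at the $C^2$ level (or one can simply mollify the initial data to $C^3$, which costs nothing since $\delta$ is later sent to zero). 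This does not affect the use of the proposition in the fixed-point argument, but it is worth flagging if you want \eqref{2.50} to be literally correct.
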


\begin{proof}
Taking $\mathbf{u}=\mathbf{u}_n$, we can rewrite the continuity equation (\ref{1.50}a) as the following transport equation
\begin{equation*}
\partial_t\varrho+\mathbf{u}_n\cdot\nabla\varrho=0.
\end{equation*}
By characteristics method, we have
\begin{equation}\label{2.52}
\varrho(t,x)=\varrho_{0,\delta}(x-\mathbf{u}_nt),
\end{equation}
which, combined with the assumptions imposed on the initial data $\varrho_{0,\delta}$ in \eqref{1.56}, yields the properties in Proposition \ref{2.2.}.
\end{proof}

\

For the temperature equation (\ref{1.50}d), similarly as in \cite{Feireisl Dynamics 2004,Feireisl Singular}, we have the following proposition.

\begin{proposition}\label{2.3.}
Let $\mathbf{u}=\mathbf{u}_n$ be a given vector function belonging to $C([0,T]; X_n)$ and $\varrho=\varrho_n$ be the unique solution in Proposition \ref{2.2.}. Suppose that the initial data $\vartheta_{0,\delta}$ satisfy the hypotheses in \eqref{1.56}.

Then there exists a mapping $\vartheta_n=\vartheta[\mathbf{u}_n]$ having the following properties:
\begin{itemize}
\item the initial-boundary value problem (\ref{1.50}d), \eqref{1.54} and \eqref{1.55} admits a unique strong solution $\vartheta_n=\vartheta[\mathbf{u}_n]$;

\item the solution $\vartheta_n$ has the following regularity properties:
\begin{equation}\label{2.60}
\begin{split}
\nabla\vartheta_n\in L^2((0,T)\times\Omega),\quad
\partial_t\vartheta_n\in L^2((0,T)\times\Omega);
\end{split}
\end{equation}

\item continuity of the mapping:
\begin{equation}\label{2.61}
\|\vartheta_{n_1}-\vartheta_{n_2}\|_{L^2(0,T;H^1(\Omega))}
\leq C\sqrt{T}\|\mathbf{u}_{n_1}-\mathbf{u}_{n_2}\|_{C([0,T];X_n)}.
\end{equation}
\end{itemize}
\end{proposition}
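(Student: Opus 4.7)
The plan is to view the temperature equation (\ref{1.50}d) as a quasilinear parabolic equation for $\vartheta$ with Neumann data, in which $\varrho=\varrho_n$ and $\mathbf{u}=\mathbf{u}_n$ play the role of given smooth coefficients. Using ${\rm div}\,\mathbf{u}_n=0$ together with (\ref{1.50}a), one rewrites the equation in non-divergence form as
\begin{equation*}
(\delta+\varrho_n)\partial_t\vartheta+\varrho_n\mathbf{u}_n\cdot\nabla\vartheta-{\rm div}\bigl(\kappa(\vartheta)\nabla\vartheta\bigr)+\delta\vartheta^3=(1-\delta)\mu(\vartheta)(\nabla\mathbf{u}_n+\nabla^{\mathbf{t}}\mathbf{u}_n):\nabla\mathbf{u}_n.
\end{equation*}
Since $X_n$ is finite-dimensional and $\mathbf{u}_n\in C([0,T];X_n)$, all spatial derivatives of $\mathbf{u}_n$ are bounded in $L^\infty((0,T)\times\Omega)$ by $C_n\|\mathbf{u}_n\|_{C([0,T];X_n)}$, and by Proposition \ref{2.2.} one has $\varrho_n\in C([0,T];C^2(\bar{\Omega}))$ with $\delta\leq\varrho_n\leq\bar{\varrho}$. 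Thus the time coefficient $\delta+\varrho_n\geq\delta$ is bounded away from zero and the diffusion $\kappa(\vartheta)\geq\underline{\kappa}>0$ is uniformly elliptic, putting us in the setting of a genuine quasilinear parabolic problem with smooth data.

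The existence of a unique strong solution $\vartheta_n$ with the claimed regularity will be obtained via a Galerkin scheme in the eigenbasis of the Neumann Laplacian, with the limit closed using the standard quasilinear parabolic theory of \cite{Ladyzhenskaya}. The needed a priori bounds, first carried out at the Galerkin level and then inherited in the limit, are the following. Non-negativity $\vartheta\geq 0$ comes from testing against $-\vartheta^-$, using $\vartheta_{0,\delta}\geq\underline{\vartheta}>0$ and the favorable sign of the cubic term. An $L^\infty$ bound $\|\vartheta\|_{L^\infty((0,T)\times\Omega)}\leq C(n,\delta)$ is then produced by a Moser-type iteration: the source grows at most linearly in $\vartheta$ because $\mu$ is globally Lipschitz, while the damping $\delta\vartheta^3$ dominates it for large $\vartheta$. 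Once $\vartheta\in L^\infty$, multiplying by $\vartheta$ and integrating by parts yields $\nabla\vartheta\in L^2((0,T)\times\Omega)$, and rearranging the equation together with the $L^\infty$ bound controls $(\delta+\varrho_n)\partial_t\vartheta$, hence $\partial_t\vartheta$, in $L^2$.

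For uniqueness and the Lipschitz estimate \eqref{2.61}, let $\vartheta_i=\vartheta[\mathbf{u}_{n_i}]$ and $\varrho_i=\varrho[\mathbf{u}_{n_i}]$ for $i=1,2$, and set $W=\vartheta_1-\vartheta_2$. Subtracting the two equations, rewriting the difference of the quasilinear diffusion via
\begin{equation*}
\mathcal{K}(\vartheta_1)-\mathcal{K}(\vartheta_2)=\Bigl(\int_0^1\kappa(\tau\vartheta_1+(1-\tau)\vartheta_2)\,d\tau\Bigr)W,
\end{equation*}
and testing against $W$, one obtains an energy inequality of the form
\begin{equation*}
\frac{d}{dt}\int_\Omega(\delta+\varrho_1)W^2\,dx+\underline{\kappa}\int_\Omega|\nabla W|^2\,dx\leq C\bigl(\|W\|_{L^2(\Omega)}^2+\|\varrho_1-\varrho_2\|_{C^2(\bar{\Omega})}^2+\|\mathbf{u}_{n_1}-\mathbf{u}_{n_2}\|^2_{C([0,T];X_n)}\bigr),
\end{equation*}
where the Lipschitz continuity of $\kappa$ and $\mu$, the uniform $L^\infty$ bound on the $\vartheta_i$, and the density continuity \eqref{2.50} have been used. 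Gronwall's inequality together with $W(0)=0$ then delivers \eqref{2.61}.

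The main obstacle is closing the $L^\infty$ bound on $\vartheta$: the diffusion coefficient $\kappa(\vartheta)$ grows quadratically, and the source grows linearly in $\vartheta$ (because $\mu$ is only Lipschitz and $\nabla\mathbf{u}_n$ is merely bounded), so one really must exploit the cubic absorption $\delta\vartheta^3$ to obtain a temperature bound at this approximation level — this is precisely the stabilizing role of the artificial $\delta$-terms highlighted in Remark \ref{1.5.}. Once the $L^\infty$ bound is in place, the remaining regularity and the Lipschitz-dependence statement reduce to routine energy arguments.
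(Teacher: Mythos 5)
Your proposal is correct and matches the paper's intent: the paper simply cites \cite{Ladyzhenskaya} and \cite{Feireisl Dynamics 2004,Feireisl Singular} and states that (\ref{1.50}d) is a quasilinear parabolic problem with smooth frozen coefficients $\varrho_n,\mathbf u_n$, which is exactly the framework you set up in non-divergence form. Your fleshed-out details (the $L^\infty$ bound via the cubic absorption $\delta\vartheta^3$ dominating the source which, by global Lipschitz continuity of $\mu$ and boundedness of $\nabla\mathbf u_n$ on the finite-dimensional space $X_n$, grows only linearly in $\vartheta$; and the Gronwall argument for \eqref{2.61} combined with the density continuity \eqref{2.50}) are all consistent with what the cited parabolic theory yields, so this is the same route, just with the steps the paper leaves to the references made explicit.
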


\

Based on Proposition \ref{2.2.} and Proposition \ref{2.3.}, we can follow the same idea as in  \cite{Feireisl Dynamics 2004}- \cite{Feireisl Singular} to obtain the local existence as follows.

\begin{proposition}\label{2.4.}
For fixed $\varepsilon, \delta>0$, assume that the initial data satisfy \eqref{1.56} and $X_n$ is defined by \eqref{2.29}. Denote $\mathbf{u}_n(0):=\mathbf{u}_{0,\delta,n}$ and suppose
$\varrho_{0,\delta}\mathbf{u}_{0,\delta,n}=\mathbf{m}_{0,\delta}$ for any n.

Then the approximate problem (\ref{1.50}a),(\ref{1.50}d), \eqref{1.54}, \eqref{1.55} and \eqref{2.41} admits a local solution $(\varrho_n, \mathbf{u}_n, \vartheta_n)$ on a short time interval $[0,T_n]$ with $T_n\leq T$ satisfying
Proposition \ref{2.2.} and Proposition \ref{2.3.}.
\end{proposition}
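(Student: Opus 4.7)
The plan is to construct $(\varrho_n, \mathbf{u}_n, \vartheta_n)$ via a fixed-point argument in which the velocity is the principal unknown. Define a map $\mathcal{T} : C([0,T_n]; X_n) \to C([0,T_n]; X_n)$ as follows: given $\mathbf{v} \in C([0,T_n]; X_n)$, use Proposition \ref{2.2.} and Proposition \ref{2.3.} to produce $\varrho[\mathbf{v}]$ and $\vartheta[\mathbf{v}]$; then set $\mathcal{T}(\mathbf{v}) = \tilde{\mathbf{u}}_n$, where $\tilde{\mathbf{u}}_n \in C([0,T_n]; X_n)$ is the unique solution of \eqref{2.41} in which the coefficients $\varrho$ and $\vartheta$ in $\mathbb{S}_n = \mu(\vartheta[\mathbf{v}])(\nabla \tilde{\mathbf{u}}_n + \nabla^{\mathbf{t}} \tilde{\mathbf{u}}_n)$ and in the convective term are frozen to $\varrho[\mathbf{v}]$ and $\vartheta[\mathbf{v}]$, with initial datum $\tilde{\mathbf{u}}_n(0) = \mathbf{u}_{0,\delta,n}$. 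A fixed point of $\mathcal{T}$ clearly yields the desired local solution.

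To see that $\mathcal{T}$ is well-defined, expand $\tilde{\mathbf{u}}_n(t) = \sum_{k=1}^n c_k(t) \eta_k$ and test \eqref{2.41} against each $\eta_j$. Differentiating in $t$ produces an algebraic-differential identity of the form
\begin{equation*}
\sum_{k=1}^n M_{jk}[\mathbf{v}](t)\, \dot c_k(t) = F_j[\mathbf{v}](t, c(t)), \qquad j = 1,\dots,n,
\end{equation*}
with the Gram-type mass matrix $M_{jk}[\mathbf{v}](t) := \int_\Omega \varrho[\mathbf{v}](t)\, \eta_j \cdot \eta_k \, dx$. Proposition \ref{2.2.} guarantees $\varrho[\mathbf{v}] \geq \delta > 0$, so $M[\mathbf{v}](t)$ is uniformly positive definite on $[0,T]$; inverting it reduces the above to an ODE system $\dot c(t) = G[\mathbf{v}](t, c(t))$ whose right-hand side is continuous in $t$ and locally Lipschitz in $c$. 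The Cauchy--Lipschitz theorem then yields a unique $c \in C^1([0,T_n]; \mathbb{R}^n)$, and shrinking $T_n$ if necessary ensures $\|\tilde{\mathbf{u}}_n\|_{C([0,T_n]; X_n)} \leq R$ for a fixed ball $B_R$ into which $\mathcal{T}$ maps.

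To produce a fixed point I would apply the Banach contraction principle. For $\mathbf{v}_1, \mathbf{v}_2 \in B_R$, subtract the ODE systems for $\mathcal{T}(\mathbf{v}_1)$ and $\mathcal{T}(\mathbf{v}_2)$ and use the equivalence of all norms on the finite-dimensional $X_n$, the continuity estimates \eqref{2.50} and \eqref{2.61}, the Lipschitz assumption on $\mu$ in \eqref{1.32}, and Gr\"onwall's inequality to obtain
\begin{equation*}
\| \mathcal{T}(\mathbf{v}_1) - \mathcal{T}(\mathbf{v}_2) \|_{C([0,T_n]; X_n)} \leq C(n, R, \varepsilon, \delta)\, \sqrt{T_n}\, \| \mathbf{v}_1 - \mathbf{v}_2 \|_{C([0,T_n]; X_n)}.
\end{equation*}
Choosing $T_n$ small enough makes the prefactor strictly less than one, so $\mathcal{T}$ admits a unique fixed point $\mathbf{u}_n$; setting $\varrho_n = \varrho[\mathbf{u}_n]$ and $\vartheta_n = \vartheta[\mathbf{u}_n]$ completes the construction. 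The principal obstacle is bookkeeping the constants in the contraction estimate, since the nonlinear terms $\varrho \mathbf{u} \otimes \mathbf{u}$, $\mu(\vartheta)(\nabla \mathbf{u} + \nabla^{\mathbf{t}} \mathbf{u})$ and the dissipation $\mathbb{S}:\nabla \mathbf{u}$ couple all three fields; the finite dimensionality of $X_n$ absorbs the spatial-regularity issues (all norms on $X_n$ are equivalent to the one induced on $\eta_k$), while the uniform lower bound $\varrho[\mathbf{v}] \geq \delta$ is what makes the mass matrix invertible in the first place.
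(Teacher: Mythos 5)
Your argument is correct and follows essentially the same route the paper outlines (it points to the ``standard fixed point theorem'' and to Feireisl's book for details): construct the map $\mathcal{T}$ via Propositions \ref{2.2.} and \ref{2.3.}, use the lower bound $\varrho[\mathbf{v}] \geq \delta$ to make the Gram/mass matrix invertible, use equivalence of norms on the finite-dimensional $X_n$ together with the continuity estimates \eqref{2.50}, \eqref{2.61} and the Lipschitz bound on $\mu$, and close with a contraction for $T_n$ small. One small stylistic remark: in the most common realization of this scheme one freezes \emph{all} occurrences of $\mathbf{u}_n$ on the right-hand side of \eqref{2.41} to $\mathbf{v}$ (not just $\varrho$ and $\vartheta$), so that $\tilde{\mathbf{u}}_n$ is recovered by simply inverting the mass matrix and no Cauchy--Lipschitz step is needed; your variant, where the convective and viscous terms keep $\tilde{\mathbf{u}}_n$, also works but makes the well-definedness of $\mathcal{T}$ (a nonlinear ODE) and the contraction estimate slightly more laborious, with no gain, since in either case the fixed point produces the same limiting system.
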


\

Now, in order to show $T_n=T$ for any $n$, it is enough to obtain uniform (in time) bounds on the norm $\|\mathbf{u}_n(t)\|_{X_n}$ for $t\in[0, T_n]$ independent of $T_n$, which is often obtained by energy estimates.

First, by \eqref{2.41}, we obtain that the velocity $\mathbf{u}_n$ is continuously differentiable, which implies the following integral identity holds on $(0,T_n)$ for any $\eta\in X_n$
\begin{equation}\label{4.0}
\int_\Omega \partial_t(\varrho_n\mathbf{u}_n)\cdot\eta dx
=\int_\Omega \left(\varrho_n\mathbf{u}_n\otimes\mathbf{u}_n-\mathbb{S}_n-\varepsilon\nabla\mathbf{u}_n\right):\nabla\eta dx.
\end{equation}
Taking $\eta=\mathbf{u}_n$ in \eqref{4.0}, we obtain
\begin{equation}\label{4.1}
\frac{d}{dt}\int_\Omega\frac{1}{2}\varrho_n|\mathbf{u}_n|^2 dx
+\int_\Omega\frac{1}{2}\mu(\vartheta_n)\left(\nabla\mathbf{u}_n+\nabla^{\mathbf{t}}\mathbf{u}_n\right)^2dx
+\varepsilon\int_\Omega|\nabla\mathbf{u}_n|^2 dx=0.
\end{equation}
Integrating \eqref{4.1} over $(0,\tau)$ for any $\tau\in [0,T_n]$, we have
\begin{equation}\label{4.100}
\begin{split}
&\frac{1}{2}\int_\Omega(\varrho_n|\mathbf{u}_n|^2)(\tau) dx
+\int_0^\tau\int_\Omega\frac{1}{2}\mu(\vartheta_n)\left(\nabla\mathbf{u}_n+\nabla^{\mathbf{t}}\mathbf{u}_n\right)^2dxds
+\varepsilon\int_0^\tau\int_\Omega|\nabla\mathbf{u}_n|^2 dxds\\
&=\frac{1}{2}\int_\Omega\mathbf{m}_{0,\delta}\cdot\mathbf{u}_{n}(0) dx,
\end{split}
\end{equation}
where the term on the right-hand side can be controlled by
\begin{equation}\label{4.101}
\begin{split}
&\int_\Omega\mathbf{m}_{0,\delta}\cdot\mathbf{u}_{n}(0) dx
=\int_\Omega\mathbf{m}_{0,\delta}\cdot\mathbf{u}_{0,\delta,n} dx\\
&\leq \frac{1}{2}\int_\Omega \left(\frac{|\mathbf{m}_{0,\delta}|^2}{\varrho_{0,\delta}}+\varrho_{0,\delta}|\mathbf{u}_{0,\delta,n}|^2\right)dx
=\frac{1}{2}\int_\Omega
\left(\frac{|\mathbf{m}_{0,\delta}|^2}{\varrho_{0,\delta}}+\mathbf{m}_{0,\delta}\cdot\mathbf{u}_{0,\delta,n}\right)dx.
\end{split}
\end{equation}
Thus, we deduce for any $\tau\in [0,T_n]$
\begin{equation}\label{4.102}
\begin{split}
&\frac{1}{2}\int_\Omega(\varrho_n|\mathbf{u}_n|^2)(\tau) dx
+\int_0^\tau\int_\Omega\frac{1}{2}\mu(\vartheta_n)\left(\nabla\mathbf{u}_n+\nabla^{\mathbf{t}}\mathbf{u}_n\right)^2dxds
+\varepsilon\int_0^\tau\int_\Omega|\nabla\mathbf{u}_n|^2 dxds\\
&\leq\frac{1}{2}\int_\Omega \frac{|\mathbf{m}_{0,\delta}|^2}{\varrho_{0,\delta}} dx.
\end{split}
\end{equation}
This implies
\begin{equation}\label{4.2}
\|\sqrt{\varrho_n}\mathbf{u}_n\|_{L^\infty(0,T_n; L^2(\Omega))}\leq C,
\end{equation}
where $C$ is independent of $n$ and $T_n$.

Since $\varrho_n$ is bounded from below by a positive constant, we deduce
\begin{equation}\label{4.3}
\|\mathbf{u}_n\|_{L^\infty(0,T_n; L^2(\Omega))}\leq C.
\end{equation}
By virtue of the fact that all norms are equivalent on $X_n$, we have
\begin{equation}\label{4.4}
\|\mathbf{u}_n\|_{L^\infty(0,T_n; X_n)}\leq C,
\end{equation}
with $C$ independent of $n$ and $T_n$, which allows to extend the local existence result on $T_n$ to the global existence on $T$ by repeating the fixed point argument above after finite steps.

\

\subsection{\bf Passing to the limit for $n\to\infty$}\
\

The goal of this subsection is to pass limit to the approximate solutions $(\varrho_n, \mathbf{u}_n,\vartheta_n)$ to recover a weak solution to the approximate system \eqref{1.50}-\eqref{1.55}, as $n\to\infty$, for any fixed $\varepsilon$, $\delta>0$.
For convenience, in the rest of this subsection, we denote $C$ a generic positive constant which is independent of $n$. In fact, we have the following uniform bounds.
\begin{proposition}\label{4.2.}
For fixed $\varepsilon$, $\delta>0$, under the hypotheses of Proposition \ref{2.1.}, we have
\begin{equation}\label{4.37}
\|\varrho_n\|_{L^\infty((0,T)\times\Omega)}\leq C,
\end{equation}
\begin{equation}\label{4.38}
\|\mathbf{u}_n\|_{L^2(0,T; H^{1}_0(\Omega))}\leq C,
\end{equation}
\begin{equation}\label{4.39}
\|\vartheta_n\|_{L^2(0,T; H^{1}(\Omega))}\leq C,
\end{equation}
\begin{equation}\label{4.40}
\|\vartheta_n\|_{L^3((0,T)\times\Omega)}\leq C,
\end{equation}
\begin{equation}\label{4.41}
\|\sqrt{\varrho}_n\mathbf{u}_n\|_{L^\infty(0,T; L^2(\Omega))}\leq C,
\end{equation}
\begin{equation}\label{4.42}
\|\varrho_n\vartheta_n \|_{L^\infty(0,T;L^1(\Omega))}\leq C.
\end{equation}
\end{proposition}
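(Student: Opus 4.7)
I collect the six $n$-uniform bounds from two integral identities together with a single renormalization of the temperature equation. Estimate \eqref{4.37} is the pointwise bound $\delta\leq\varrho_n\leq\bar{\varrho}$ already contained in Proposition~\ref{2.2.}. Bounds \eqref{4.38} and \eqref{4.41} are immediate from the kinetic energy identity \eqref{4.102}: its first term yields \eqref{4.41}, and the dissipation $\varepsilon\int|\nabla\mathbf{u}_n|^2$, combined with Poincar\'e's inequality and the fact that $\varepsilon>0$ is a fixed parameter at this stage, yields \eqref{4.38}. I also record the useful by-product that $\int_0^T\!\!\int_\Omega \mathbb{S}_n:\nabla\mathbf{u}_n\leq C$ uniformly in $n$, since $\int\mathbb{S}:\nabla\mathbf{u}=\tfrac{1}{2}\int\mu(\vartheta)|\nabla\mathbf{u}+\nabla^{\mathbf{t}}\mathbf{u}|^2$ is precisely the viscous dissipation controlled by \eqref{4.102}.

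For \eqref{4.40} and \eqref{4.42}, I would integrate (\ref{1.50}d) over $\Omega$. The Neumann condition \eqref{1.55} annihilates $\int_\Omega \Delta\mathcal{K}(\vartheta_n)$ and the no-slip condition annihilates the convective term, leaving
$$\frac{d}{dt}\int_\Omega(\delta+\varrho_n)\vartheta_n\,dx + \delta\int_\Omega\vartheta_n^3\,dx = (1-\delta)\int_\Omega\mathbb{S}_n:\nabla\mathbf{u}_n\,dx.$$
Adding the kinetic energy identity \eqref{4.1} combines the two occurrences of $\int\mathbb{S}_n:\nabla\mathbf{u}_n$ into a single non-negative term with coefficient $\delta>0$ and produces the total energy inequality
$$\int_\Omega\Bigl(\tfrac{1}{2}\varrho_n|\mathbf{u}_n|^2+(\delta+\varrho_n)\vartheta_n\Bigr)(\tau)\,dx + \int_0^\tau\!\!\int_\Omega\Bigl(\delta\,\mathbb{S}_n:\nabla\mathbf{u}_n + \varepsilon|\nabla\mathbf{u}_n|^2 + \delta\vartheta_n^3\Bigr)dxds \leq E_\delta(0),$$
every remaining contribution being non-negative. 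Since $E_\delta(0)$ from \eqref{initial} is $n$-independent, this yields \eqref{4.42} and, as $\delta$ is held fixed at this stage, \eqref{4.40}.

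The delicate estimate is \eqref{4.39}. Testing (\ref{1.50}d) directly by $\vartheta_n$ generates the uncontrollable source $\int\vartheta_n\mathbb{S}_n:\nabla\mathbf{u}_n$, so I would instead renormalize by multiplying (\ref{1.50}d) by the bounded, increasing function $h(\vartheta)=\vartheta/(1+\vartheta)$; this manipulation is rigorous at the Galerkin level thanks to the regularity from Proposition~\ref{2.3.}. Using (\ref{1.50}a) to express the transport pair through $H(\vartheta)=\vartheta-\log(1+\vartheta)\geq 0$ and integrating by parts in the heat flux, one obtains
$$\frac{d}{dt}\int_\Omega(\delta+\varrho_n)H(\vartheta_n) + \int_\Omega h'(\vartheta_n)\kappa(\vartheta_n)|\nabla\vartheta_n|^2 + \delta\int_\Omega\vartheta_n^3 h(\vartheta_n) = (1-\delta)\int_\Omega h(\vartheta_n)\mathbb{S}_n:\nabla\mathbf{u}_n.$$
Since $0\leq h\leq 1$, the right-hand side is dominated by the uniform bound on $\int_0^T\!\!\int\mathbb{S}_n:\nabla\mathbf{u}_n$ recorded above, while \eqref{1.33} gives
$$h'(\vartheta)\kappa(\vartheta) = \frac{\kappa(\vartheta)}{(1+\vartheta)^2}\geq \underline{\kappa}\,\frac{1+\vartheta^2}{(1+\vartheta)^2}\geq \frac{\underline{\kappa}}{2}\qquad\text{for all } \vartheta\geq 0.$$
Discarding the non-negative $H$-term and integrating in time then yields $\|\nabla\vartheta_n\|_{L^2((0,T)\times\Omega)}\leq C$; combined with the $L^2((0,T)\times\Omega)$ control embedded in \eqref{4.40}, this gives \eqref{4.39}.

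\textbf{Main obstacle.} The only non-trivial step is \eqref{4.39}: the viscous heating $\mathbb{S}_n:\nabla\mathbf{u}_n$ is controlled only in $L^1((0,T)\times\Omega)$, so any test function used against the temperature equation must remain bounded, yet the diffusion coefficient in the resulting identity must simultaneously stay bounded below in order to absorb $|\nabla\vartheta_n|^2$. These competing requirements are reconciled by the specific choice $h(\vartheta)=\vartheta/(1+\vartheta)$, and here the precise quadratic lower bound on $\kappa$ in \eqref{1.33} is essential.
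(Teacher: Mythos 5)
Your proposal is correct and follows essentially the same strategy as the paper: deduce \eqref{4.38} and \eqref{4.41} from the kinetic-energy identity \eqref{4.102}, obtain \eqref{4.37} from the pointwise density bound, derive the total-energy inequality \eqref{4.23} to get \eqref{4.40} and \eqref{4.42}, and renormalize the temperature equation to extract the $H^1$ control on $\vartheta_n$. The only (cosmetic) difference is your choice of renormalizing function $h(\vartheta)=\vartheta/(1+\vartheta)$ versus the paper's $h(\vartheta)=1/(1+\vartheta)$; since $|h'(\vartheta)|=1/(1+\vartheta)^2$ in both cases, one lands on the same integral $\int\kappa(\vartheta_n)(1+\vartheta_n)^{-2}|\nabla\vartheta_n|^2$ and the same lower bound $(1+\vartheta^2)/(1+\vartheta)^2\geq 1/2$ from \eqref{1.33}, so the resulting estimate \eqref{4.39} is identical.
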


\begin{proof}
First, replacing $T_n$ by $T$ in the energy inequality \eqref{4.102} and thanks to Poincar\'{e}'s inequality, we have \eqref{4.38} and \eqref{4.41}. As proved in \cite{Lions 1996 Incompressible models}, by the divergence-free property of $\mathbf{u}_n$, we have for all $0\leq \alpha\leq \beta<\infty$
\[meas\{x\in\Omega\,|\alpha\leq\varrho_n(t,x)\leq\beta\}\,\,{\rm is\,\,independent\,\,of\,\,}t\geq0,\]
which combined with $0<\delta\leq\varrho_{0,\delta}(x)\leq \bar{\varrho}$ implies \eqref{4.37}.

Then, integrating the equation (\ref{1.50}d) over $(0,\tau)\times\Omega$ for any $\tau\in[0,T]$ and adding to \eqref{4.102}, we obtain the following total energy inequality
\begin{equation}\label{4.23}
\begin{split}
&\int_\Omega\left(\frac{1}{2}\varrho_n|\mathbf{u}_n|^2+(\delta+\varrho_n)\vartheta_n \right)(\tau)dx
+\varepsilon\int_0^\tau \int_\Omega|\nabla\mathbf{u}_n|^2 dxds\\
&+\delta\int_0^\tau\int_\Omega \left(\vartheta_n^3+\mathbb{S}_n:\nabla\mathbf{u}_n \right)dx ds
\leq\int_\Omega\left(\frac{1}{2}\frac{|\mathbf{m}_{0,\delta}|^2}{\varrho_{0,\delta}}
+(\delta+\varrho_{0,\delta})\vartheta_{0,\delta} \right)dx,
\end{split}
\end{equation}
which implies \eqref{4.40} and \eqref{4.42}.

Next, multiplying (\ref{1.50}d) by $h(\vartheta_n)$, we have
\begin{equation}\label{4.30}
\begin{split}
&\partial_t((\delta+\varrho_n)H(\vartheta_n))
+{\rm div}(\varrho_n \mathbf{u}_n H(\vartheta_n))
-\triangle \mathcal{K}_h(\vartheta_n)\\
&+\kappa(\vartheta_n)h'(\vartheta_n)|\nabla\vartheta_n|^2
+\delta \vartheta_n^3 h(\vartheta_n)
=(1-\delta)\mathbb{S}_n:\nabla\mathbf{u}_nh(\vartheta_n),
\end{split}
\end{equation}
where $H(\vartheta)=\int_0^{\vartheta} h(z) dz$ and $\mathcal{K}_h(\vartheta)=\int_0^{\vartheta} \kappa(z)h(z) dz$, with $h$ meeting \eqref{2.10} and \eqref{2.01}.
Integrating \eqref{4.30} over $(0,\tau)\times\Omega$ for any $\tau\in[0,T]$, one obtains
\begin{equation}\label{4.32}
\begin{split}
&\int_\Omega \left((\delta+\varrho_n)H(\vartheta_n)\right)(\tau) dx
+\int_0^\tau\int_\Omega \kappa(\vartheta_n)h'(\vartheta_n)|\nabla\vartheta_n|^2dxds\\
&\quad+ \delta \int_0^\tau\int_\Omega \vartheta_n^3h(\vartheta_n) dxds\\
&=(1-\delta)\int_0^\tau\int_\Omega\mathbb{S}_n:\nabla\mathbf{u}_nh(\vartheta_n)dxds
+\int_\Omega (\delta+\varrho_{0,\delta})H(\vartheta_{0,\delta}) dx.
\end{split}
\end{equation}
Choosing $h(\vartheta_n)=\frac{1}{1+\vartheta_n}$ in \eqref{4.32}, we obtain
\begin{equation}\label{4.33}
\begin{split}
&\int_0^\tau \int_\Omega \frac{\kappa(\vartheta_n)}{(1+\vartheta_n)^{2}}|\nabla\vartheta_n|^2 dx ds\\
&=\int_\Omega \left((\delta+\varrho_n)H(\vartheta_n)\right)(\tau) dx-\int_\Omega (\delta+\varrho_{0,\delta})H(\vartheta_{0,\delta}) dx\\
&+\delta \int_0^\tau  \int_\Omega \frac{\vartheta_n^3}{1+\vartheta_n} dx ds
-(1-\delta)\int_0^\tau \int_\Omega\frac{\mathbb{S}_n:\nabla\mathbf{u}_n}{1+\vartheta_n} dx ds,
\end{split}
\end{equation}
where by the growth restriction imposed on $\kappa(\vartheta)$ \eqref{1.33}, the term on the left-hand side can be controlled by
\begin{equation*}\label{4.34}
0<C\int_0^\tau \int_\Omega |\nabla\vartheta_n|^2  dxds
\leq\int_0^\tau \int_\Omega \frac{\kappa(\vartheta_n)}{(1+\vartheta_n)^2}|\nabla\vartheta_n|^2 dxds,
\end{equation*}
and hence \eqref{4.39}.
\end{proof}

\

\subsubsection{{\bf Strong convergence of the approximate density $\varrho_n$}}\

In this subsection, we recall a strong convergence result for the density $\varrho_n$.

\begin{lemma}\label{5.1.}{\rm(\cite{DiPerna and Lions,Lions 1996 Incompressible models})}
Assume that the sequence $(\varrho_n, \mathbf{u}_n)$ solves equations (\ref{1.50}a), (\ref{1.50}b) in the sense of distributions, and satisfies the estimates \eqref{4.37} and \eqref{4.38}.

Then we have
\begin{equation}\label{5.10}
\varrho_n\rightarrow \varrho{\rm\,\, in}\,\, C([0,T]; L^p(\Omega))
\end{equation}
for any $1\leq p<\infty$.
Moreover, $\varrho$ satisfies
\begin{equation}\label{5.11}
meas\{x\in\Omega\,|\,\alpha\leq\varrho(t,x)\leq\beta\} \,\,{\rm is \,\,independent\,\, of}\,\,t\geq0,
\end{equation}
for all $0\leq\alpha\leq\beta<\infty$.
\end{lemma}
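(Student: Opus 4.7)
The plan is to follow the DiPerna-Lions renormalization strategy for the transport equation, since the hypotheses \eqref{4.37}, \eqref{4.38} together with $\mathrm{div}\,\mathbf{u}_n=0$ are exactly what that framework requires. First, up to a subsequence, I extract weak limits $\varrho_n \overset{*}{\rightharpoonup} \varrho$ in $L^\infty((0,T)\times\Omega)$ and $\mathbf{u}_n \rightharpoonup \mathbf{u}$ in $L^2(0,T; H^1_0(\Omega))$. The continuity equation (\ref{1.50}a) bounds $\partial_t \varrho_n$ in $L^2(0,T; H^{-1}(\Omega))$, so an Arzel\`a-Ascoli argument in the weak topology, combined with the $L^\infty$ bound, gives $\varrho_n \to \varrho$ in $C([0,T]; L^p_{\mathrm{weak}}(\Omega))$ for every $1 \leq p < \infty$, and the limiting pair satisfies $\partial_t \varrho + \mathrm{div}(\varrho \mathbf{u}) = 0$ together with $\mathrm{div}\,\mathbf{u} = 0$ in $\mathcal{D}'$.

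The core step is renormalization. At level $n$, $\varrho_n$ is the classical solution provided by Proposition \ref{2.2.} and $\mathrm{div}\,\mathbf{u}_n = 0$, so pointwise
\begin{equation*}
\partial_t \beta(\varrho_n) + \mathrm{div}(\beta(\varrho_n)\mathbf{u}_n) = 0
\end{equation*}
for every $\beta \in C^1(\mathbb{R})$. The DiPerna-Lions commutator lemma then yields the same identity for $(\varrho,\mathbf{u})$ in $\mathcal{D}'$, using $\mathbf{u} \in L^2(0,T; H^1_0)$ and $\varrho \in L^\infty$. Choosing $\beta(s)=s^2$ and integrating over $\Omega$ (the boundary flux vanishes since $\mathbf{u}_n = \mathbf{u} = 0$ on $\partial\Omega$) gives
\begin{equation*}
\int_\Omega \varrho_n^2(t,x)\,dx \;=\; \int_\Omega \varrho_{0,\delta}^2(x)\,dx \;=\; \int_\Omega \varrho^2(t,x)\,dx
\end{equation*}
for each $t \in [0,T]$. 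Combined with the weak $L^2(\Omega)$ convergence $\varrho_n(t)\rightharpoonup \varrho(t)$ from Step 1, the matching of $L^2$-norms forces strong convergence $\varrho_n(t) \to \varrho(t)$ in $L^2(\Omega)$ for every $t$. Interpolating with the uniform $L^\infty$ bound \eqref{4.37} upgrades this to $L^p(\Omega)$ for any $p < \infty$, and the time-uniform weak continuity from Step 1 promotes the convergence to $C([0,T]; L^p(\Omega))$, giving \eqref{5.10}.

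For \eqref{5.11}, I apply the renormalized identity to a sequence of $C^1$ functions $\beta_k$ approximating $\mathbf{1}_{[\alpha,\beta]}$; integrating on $\Omega$ gives $t \mapsto \int_\Omega \beta_k(\varrho(t,x))\,dx$ constant in $t$, and letting $k\to\infty$ with dominated convergence yields the stated level-set invariance. The principal obstacle is the renormalization step for the weak limit $(\varrho,\mathbf{u})$: the $L^2$-bound on $\nabla \mathbf{u}$ is precisely the minimal regularity required by the DiPerna-Lions commutator estimate, and this is the one ingredient that must be imported from \cite{DiPerna and Lions} rather than derived from scratch. Everything else is mechanical given the uniform bounds of Proposition \ref{4.2.}.
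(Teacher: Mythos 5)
Your proposal is correct and is essentially the standard DiPerna--Lions renormalization argument that the paper itself points to by citing \cite{DiPerna and Lions} and \cite{Lions 1996 Incompressible models} instead of supplying a proof; the sequence of steps (weak-$*$/Aubin--Lions compactness in $C([0,T];L^p_{\mathrm w})$, renormalization of the limiting transport equation, conservation of the $L^2$-norm to upgrade weak to strong convergence, interpolation with $L^\infty$, and approximation of indicator functions for the level-set invariance) is exactly the proof in Lions' book, Vol.~1, Ch.~2.

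One place where you gloss over a nontrivial step: passing from ``$\varrho_n(t)\to\varrho(t)$ strongly in $L^2$ for each fixed $t$'' to ``$\varrho_n\to\varrho$ in $C([0,T];L^2)$'' is not immediate from weak equicontinuity alone. The argument that closes this gap is a contradiction argument using the compactness of $\{\varrho(t):t\in[0,T]\}$ in $L^2(\Omega)$: since $\varrho\in C([0,T];L^2_{\mathrm w})$ and $\|\varrho(t)\|_{L^2}\equiv\|\varrho_{0,\delta}\|_{L^2}$, one gets $\varrho\in C([0,T];L^2)$ first, then if $\|\varrho_n(t_n)-\varrho(t_n)\|_{L^2}\geq\eta>0$ along some $t_n\to t^*$, one shows $\varrho_n(t_n)\rightharpoonup\varrho(t^*)$ with matching norms, hence strong convergence, contradicting the lower bound. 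You should either supply this argument or flag that it is where the conserved $L^2$-norm is being used a second time. Aside from that, the proof is sound and matches the cited approach.
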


\

\subsubsection{{\bf Limit in the approximate continuity equation}}\

By \eqref{4.38}, we assume
\begin{equation}\label{5.2}
\mathbf{u}_n\rightharpoonup\mathbf{u}{\rm\,\,weakly\,\,in\,\,}L^2(0,T;H^{1}_0(\Omega)),
\end{equation}
at least for a suitable subsequence.

Therefore, in order to prove that $\{\varrho, \mathbf{u}\}$ solves the continuity equation (\ref{1.50}a) in the sense of distributions, it suffices to show
\begin{equation}\label{5.3}
\varrho_n \mathbf{u}_n\rightarrow\varrho\mathbf{u}\,\,{\rm in}\,\,\mathcal{D}'((0,T)\times\Omega),
\end{equation}
which can be achieved by \eqref{5.10} and \eqref{5.2}.

\

\subsubsection{{\bf Strong convergence of the approximate temperature $\vartheta_n$}}\

Before proving the strong convergence of the approximate temperature $\vartheta_n$, we need the following variant of the Aubin-Lions lemma, which plays an essential role in our proof.

\begin{lemma}\label{5.2.}{\rm (Lemma 6.3 in \cite{Feireisl Dynamics 2004})}
Let $\{\mathbf{v}_n\}_{n=1}^\infty$ be a sequence of functions such that
\begin{equation}\label{5.20}
\mathbf{v}_n {\rm\,\,is \,\,bounded \,\,in}\,\,L^2(0,T; L^q(\Omega))\cap L^\infty(0,T; L^1(\Omega)), \quad{\rm with} \,\,q>6/5,
\end{equation}
furthermore, assume that
\begin{equation}\label{5.21}
\partial_t{\mathbf{v}_n} \geq l_n \,\,{\rm in} \,\,\mathcal{D}'((0,T)\times\Omega),
\end{equation}
where
\begin{equation}\label{5.22}
l_n {\rm\,\,is\,\,bounded \,\, in}\,\,L^1(0,T; W^{-m,r}(\Omega))
\end{equation}
for a certain $m\geq1$, $r>1$.

Then $\{\mathbf{v}_n\}_{n=1}^\infty$ contains a subsequence such that
\begin{equation}\label{5.23}
\mathbf{v}_n\rightarrow \mathbf{v}\,\,{\rm in} \,\,L^2(0,T; H^{-1}(\Omega)).
\end{equation}
\end{lemma}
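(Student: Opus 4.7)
The plan is to reduce this one-sided-derivative version to the classical Aubin--Lions compactness lemma by exploiting the sign hypothesis in \eqref{5.21}. The key observation is that, while $\partial_t \mathbf{v}_n$ is only known to be bounded below by $l_n$, the distributional difference $\mu_n := \partial_t \mathbf{v}_n - l_n$ is a non-negative distribution on $(0,T)\times\Omega$, hence by Schwartz's representation theorem a non-negative Radon measure. If one can bound the total mass of $\mu_n$ uniformly in $n$, then $\partial_t \mathbf{v}_n = l_n + \mu_n$ will be controlled in a sufficiently negative Sobolev space, and a standard Aubin--Lions argument will close the argument.

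To bound the mass of $\mu_n$, I would test the identity $\mu_n = \partial_t \mathbf{v}_n - l_n$ against a smooth approximation of $\chi_{[s,t]}(\tau)\psi(x)$ with $\psi \in C_c^\infty(\Omega)$, $0\leq \psi\leq 1$. This yields, at least formally,
\begin{equation*}
\int_s^t \int_\Omega \psi\, d\mu_n \;=\; \int_\Omega \psi\bigl(\mathbf{v}_n(t)-\mathbf{v}_n(s)\bigr)dx \;-\; \int_s^t \langle l_n(\tau), \psi\rangle\, d\tau.
\end{equation*}
The first term is uniformly controlled by the $L^\infty(0,T; L^1(\Omega))$ bound in \eqref{5.20}, and the second by the $L^1(0,T; W^{-m,r}(\Omega))$ bound in \eqref{5.22}. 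Taking a supremum over admissible $\psi$ and exhausting $(0,T)\times\Omega$ by rectangles shows that $\mu_n$ is bounded as a finite Radon measure on $(0,T)\times\Omega$, which embeds continuously into $L^1(0,T; W^{-k,p}(\Omega))$ for $k$ and $p$ chosen so that $W^{k,p'}_0(\Omega) \hookrightarrow C(\bar{\Omega})$. Consequently $\partial_t \mathbf{v}_n$ is bounded in $L^1(0,T; W^{-M,R}(\Omega))$ with $M := \max\{m,k\}$ and a suitable $R>1$.

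With this estimate in hand, the conclusion follows from the classical Aubin--Lions--Simon lemma. The hypothesis $q > 6/5$ is precisely the condition under which the embedding $L^q(\Omega) \hookrightarrow H^{-1}(\Omega)$ is compact, since by duality it corresponds to the compact Sobolev embedding $H^1_0(\Omega) \hookrightarrow L^{q'}(\Omega)$ for $q' = q/(q-1) < 6$. Combining the uniform bound on $\mathbf{v}_n$ in $L^2(0,T; L^q(\Omega))$ with the control on $\partial_t \mathbf{v}_n$ in $L^1(0,T; W^{-M,R}(\Omega))$, and using $H^{-1}(\Omega) \hookrightarrow W^{-M,R}(\Omega)$ as the ambient space, Aubin--Lions yields a subsequence converging strongly in $L^2(0,T; H^{-1}(\Omega))$. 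The main technical obstacle is making the test-function argument rigorous: since \emph{a priori} $\mathbf{v}_n$ is merely distributional in time, evaluating the pointwise values $\mathbf{v}_n(t)$ as elements of $(C_c^\infty(\Omega))^\ast$ requires selecting a weakly continuous representative. This is available because $\mathbf{v}_n \in L^\infty(0,T; L^1(\Omega))$ and $\partial_t \mathbf{v}_n$ splits as an $L^1$-in-time distribution plus a non-negative measure, so $t \mapsto \langle \mathbf{v}_n(t), \psi\rangle$ has bounded variation for each fixed $\psi$ and admits a well-defined $\mathrm{cadlag}$ version.
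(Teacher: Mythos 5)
The paper does not prove this lemma; it cites it as Lemma~6.3 of Feireisl's book, so the comparison is against the standard argument there. Your proposal correctly identifies the key structural facts: that $\mu_n := \partial_t\mathbf{v}_n - l_n$ is a non-negative distribution, hence a non-negative Radon measure, and that $q>6/5$ is exactly the threshold for the compact embedding $L^q(\Omega)\hookrightarrow H^{-1}(\Omega)$. However, the plan to reduce everything to the classical Aubin--Lions lemma has two genuine gaps.

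First, the global mass bound on $\mu_n$ does not follow from the test-function estimate you write. For $\psi\in C_c^\infty(\Omega)$, $0\le\psi\le 1$, you indeed get
\[
\int_s^t\!\!\int_\Omega \psi\,d\mu_n \;\le\; C\bigl(\|\psi\|_{L^\infty(\Omega)}+\|\psi\|_{W^{m,r'}(\Omega)}\bigr),
\]
which bounds $\mu_n$ on compact subsets of $(0,T)\times\Omega$. But ``exhausting $\Omega$'' requires $\psi\to 1$, and since $\psi$ must vanish near $\partial\Omega$, any such sequence has $\|\psi\|_{W^{m,r'}(\Omega)}\to\infty$ (the gradient concentrates in a shrinking boundary layer and $r'>1$). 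So you only obtain \emph{local} boundedness of $\mu_n$, not a finite total mass on $(0,T)\times\Omega$. Second, even a finite non-negative Radon measure on the space--time cylinder need not embed into $L^1(0,T;W^{-k,p}(\Omega))$: nothing rules out temporal atoms in $\mu_n$ (i.e.\ jumps of $\mathbf{v}_n$ in $t$), and a measure with an atom in $t$ is not an $L^1$-in-time Banach-valued function. Hence the asserted bound on $\partial_t\mathbf{v}_n$ in $L^1(0,T;W^{-M,R}(\Omega))$ does not follow, and the classical Aubin--Lions--Simon lemma cannot be invoked in the way you describe.

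The argument in Feireisl's book sidesteps both problems by never trying to package $\mu_n$ as a global measure with a nice time marginal. Instead, for each fixed $0\le\psi\in C_c^\infty(\Omega)$, the map $t\mapsto \langle\mathbf{v}_n(t),\psi\rangle-\int_0^t\langle l_n(\tau),\psi\rangle\,d\tau$ is non-decreasing (this is precisely where the sign hypothesis enters) and uniformly bounded; Helly's selection theorem and a diagonal argument over a countable dense family of such $\psi$ give $\mathbf{v}_n(t)\rightharpoonup\mathbf{v}(t)$ in $\mathcal{D}'(\Omega)$ for a.e.\ $t$; the uniform $L^2(0,T;L^q)$ bound upgrades this to weak convergence in $L^q(\Omega)$, hence by the compact embedding to strong convergence in $H^{-1}(\Omega)$, for a.e.\ $t$; finally an interpolation between $L^q$ and $L^1$ in space supplies the uniform integrability needed for a Vitali-type passage to $L^2(0,T;H^{-1}(\Omega))$. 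Your closing paragraph on BV/cadlag representatives is essentially the Helly idea, but in a correct write-up it should be the engine of the proof rather than a side remark about choosing representatives.
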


\

Now we apply Lemma \ref{5.2.} to (\ref{1.50}d) with $\mathbf{v}_n=(\delta+\varrho_n)\vartheta_n$. By estimates \eqref{4.37}, \eqref{4.39} and \eqref{4.42}, we have
\begin{equation}\label{5.24}
(\delta+\varrho_n)\vartheta_n{\rm\,\,is \,\,bounded \,\,in}\,\,L^2((0,T)\times\Omega)\cap L^\infty(0,T; L^1(\Omega)),
\end{equation}
and
\begin{equation}\label{5.25}
\begin{split}
\partial_t((\delta+\varrho_n)\vartheta_n)=l_n \,\,{\rm in}\,\, \mathcal{D}'((0,T)\times\Omega),
\end{split}
\end{equation}
where
\begin{equation}\label{5.26}
l_n=-{\rm div}(\varrho_n\mathbf{u}_n\vartheta_n)
+\triangle\mathcal{K}(\vartheta_n)-\delta\vartheta_n^3+(1-\delta)\mathbb{S}_n:\nabla\mathbf{u}_n.
\end{equation}
Owing to estimates \eqref{4.37}-\eqref{4.40} and the compact imbedding of $L^1(\Omega)$ into $W^{-1,s}(\Omega)$, with $s\in (1,\frac{4}{3})$, we have
\begin{equation}\label{5.27}
l_n \,\,{\rm is\,\,bounded \,\,in}\,\,L^1(0,T; W^{-3,r}(\Omega)), \,\,r>1.
\end{equation}
Therefore, we obtain
\begin{equation}\label{5.28}
(\delta+\varrho_n)\vartheta_n\rightarrow (\delta+\varrho)\vartheta \,\,{\rm in}\,\,L^2(0,T; H^{-1}(\Omega)).
\end{equation}
Thanks to \eqref{4.39}, this yields
\begin{equation}\label{5.31}
(\delta+\varrho_n)\vartheta_n^2\rightarrow (\delta+\varrho)\vartheta^2\,\,{\rm in}\,\,\mathcal{D}'((0,T)\times\Omega),
\end{equation}
which, combined with \eqref{5.10}, implies
\begin{equation}\label{5.32}
\begin{split}
&\int_0^T \int_\Omega (\delta+\varrho)\vartheta_n^2 dxdt\\
&=\int_0^T \int_\Omega [(\delta+\varrho)-(\delta+\varrho_n)]\vartheta_n^2 dxdt+\int_0^T \int_\Omega(\delta+\varrho_n)\vartheta_n^2 dxdt\\
&\rightarrow \int_0^T \int_\Omega (\delta+\varrho)\vartheta^2 dxdt.
\end{split}
\end{equation}
Thus, by \eqref{5.32} we obtain
\begin{equation}\label{5.35}
\vartheta_n\rightarrow\vartheta\,\,{\rm in}\,\,L^2((0,T)\times\Omega).
\end{equation}

\

\subsubsection{{\bf Limit in the approximate momentum equation \eqref{4.0}}}\

Fixing $\eta$ in \eqref{4.0}, multiplying by a test function $\psi\in C_c^\infty(0,T)$ and integrating by parts with respect to t, we obtain
\begin{equation}\label{5.0}
\int_0^T\int_\Omega \partial_t\psi\varrho_n\mathbf{u}_n\cdot\eta dxdt
+\int_0^T\int_\Omega \psi \left(\varrho_n\mathbf{u}_n\otimes\mathbf{u}_n-\mathbb{S}_n-\varepsilon\nabla\mathbf{u}_n\right):\nabla\eta dxdt=0.
\end{equation}
In order to prove (\ref{1.50}c) holds in $\mathcal{D}'((0,T)\times\Omega)$, we pass to the limit for $n\rightarrow\infty$ in \eqref{5.0}.
We focus our attention on the nonlinear terms $\varrho_n\mathbf{u}_n\otimes\mathbf{u}_n$ and $\mathbb{S}_n=\mu(\vartheta_n)(\nabla\mathbf{u}_n+\nabla^{\mathbf{t}}\mathbf{u}_n)$, since the rest terms in \eqref{5.0} can be dealt with easily.

First, similarly as in the previous subsection, applying Lemma \ref{5.2.} to \eqref{5.0} with $\mathbf{v}_n=\varrho_n\mathbf{u}_n$, we have
\begin{equation}\label{5.41}
\varrho_n\mathbf{u}_n\rightarrow\varrho\mathbf{u}\,\,{\rm in}\,\,L^2([0,T];H^{-1}(\Omega)),
\end{equation}
which, combined with \eqref{5.2}, gives
\begin{equation}\label{5.42}
\varrho_n\mathbf{u}_n\otimes\mathbf{u}_n\to \varrho\mathbf{u}\otimes\mathbf{u}\,\,{\rm \,\,in\,\,}\mathcal{D}'((0,T)\times\Omega).
\end{equation}

Then, for the nonlinear term $\mu(\vartheta_n)(\nabla\mathbf{u}_n+\nabla^{\mathbf{t}}\mathbf{u}_n)$, by \eqref{5.35} and the Lipschitz continuity of $\mu(\vartheta)$, we have
\begin{equation}\label{5.420}
\mu(\vartheta_n)\rightarrow\mu(\vartheta)\,\,{\rm in}\,\,L^2((0,T)\times\Omega),
\end{equation}
which with help of \eqref{5.2} yields
\begin{equation}\label{5.43}
\mu(\vartheta_n)(\nabla\mathbf{u}_n+\nabla^{\mathbf{t}}\mathbf{u}_n)\to \mu(\vartheta)(\nabla\mathbf{u}+\nabla^{\mathbf{t}}\mathbf{u})
\,\,{\rm \,\,in\,\,}\mathcal{D}'((0,T)\times\Omega).
\end{equation}

\

\subsubsection{{\bf Limit in the renormalized temperature equation \eqref{4.30}}}\

First, by \eqref{5.35} and the properties of $h(\vartheta)$ in Proposition \ref{2.1.}, we have
\begin{equation}\label{5.47}
H(\vartheta_n)\rightarrow H(\vartheta)\,\,{\rm in}\,\,L^2((0,T)\times\Omega),
\end{equation}
which combined with \eqref{5.10}, yields
\begin{equation}\label{5.48}
(\delta+\varrho_n)H(\vartheta_n)\rightarrow(\delta+\varrho)H(\vartheta)\,\,{\rm in}\,\,L^2(0,T; L^{m}(\Omega)),
\end{equation}
for any $1\leq m<2$. Moreover, by \eqref{5.10}, \eqref{5.2} and \eqref{5.47}, we have
\begin{equation}\label{5.49}
\varrho_nH(\vartheta_n)\mathbf{u}_n\rightharpoonup \varrho H(\vartheta)\mathbf{u}\,\,{\rm weakly\,\,in}\,\,L^1((0,T)\times\Omega).
\end{equation}

Then, thanks to Proposition 2.1 in \cite{Feireisl Dynamics 2004}, we are able to deal with the terms $\mathcal{K}_h(\vartheta_n)$ and $\delta\vartheta_n^3h(\vartheta_n)$.
To be specific, by the property
\begin{equation*}
\lim_{z\rightarrow\infty} h(z)=0,
\end{equation*}
we have
\begin{equation*}
\lim_{z\rightarrow\infty} \frac{\mathcal{K}_h(z)}{\mathcal{K}(z)}=0.
\end{equation*}
By \eqref{4.40} and the growth restriction imposed on $\kappa(\vartheta)$ \eqref{1.33}, we have
\begin{equation*}
\sup_{n\geq1}\int_\Omega \mathcal{K}(\vartheta_n) dy<\infty,
\end{equation*}
 thus,
 Proposition 2.1 in \cite{Feireisl Dynamics 2004} yields
\begin{equation}\label{5.52}
\mathcal{K}_h(\vartheta_n)\rightharpoonup\mathcal{K}_h(\vartheta)\,\,{\rm weakly\,\,in}\,\,L^1((0,T)\times\Omega).
\end{equation}
Similarly, for the term $\delta\vartheta_n^3h(\vartheta_n)$, owing to
\begin{equation*}
\lim_{z\rightarrow\infty} \frac{z^3h(z)}{z^3}=0,
\end{equation*}
and
\begin{equation*}
\sup_{n\geq1}\int_\Omega \vartheta_n^3 dy<\infty,
\end{equation*}
we have
\begin{equation}\label{5.54}
\delta\vartheta_n^3h(\vartheta_n)\rightharpoonup\delta\vartheta^3h(\vartheta)\,\,{\rm weakly\,\,in}\,\,L^1((0,T)\times\Omega).
\end{equation}

Next, owing to $0<\underline{\kappa}\leq\kappa(\vartheta)$ and the non-increasing property of $h$, we have
\begin{equation}\label{5.55}
-\int_0^T\int_\Omega \kappa(\vartheta)h'(\vartheta)|\nabla\vartheta|^2 \varphi dxdt
\leq -\liminf_{n\rightarrow\infty} \int_0^T\int_\Omega \kappa(\vartheta_n)h'(\vartheta_n)|\nabla\vartheta_n|^2 \varphi dxdt,
\end{equation}
for any non-negative function $\varphi\in C_c^\infty((0,T)\times\Omega)$.

Finally, it is crucial to have the following lemma to deal with the term $\mathbb{S}_n:\nabla\mathbf{u}_nh(\vartheta_n)$.
\begin{lemma}\label{5-1}{\rm (\cite{Xianpeng Hu and Dehua Wang})}
Let $g(\vartheta)$ be a bounded, continuous and non-negative function from $[0,\infty)$ to $\mathbb{R}$. Suppose that $\vartheta_n$ and $\mathbf{u}_n$ are two
sequences of functions defined on $\Omega$ satisfying
\begin{equation*}
\vartheta_n\rightarrow\vartheta\,\,a.e.\,\,{\rm in}\,\,\Omega,
\end{equation*}
and
\begin{equation*}
\mathbf{u}_n\rightharpoonup\mathbf{u}\,\,{\rm weakly\,\,in\,\,}H^{1}(\Omega).
\end{equation*}
Then
\begin{equation*}
\int_\Omega g(\vartheta)h(\vartheta)|\nabla\mathbf{u}|^2 dx
\leq \liminf_{n\rightarrow\infty}\int_\Omega g(\vartheta_n)h(\vartheta_n)|\nabla\mathbf{u}_n|^2 dx,
\end{equation*}
where the function $h(\vartheta)$ satisfies \eqref{2.10} and \eqref{2.01}.
In particular,
\begin{equation}\label{5.59}
\int_\Omega \mathbb{S}:\nabla\mathbf{u}h(\vartheta)\varphi  dx
\leq \liminf_{n\rightarrow\infty}\int_\Omega \mathbb{S}_n:\nabla\mathbf{u}_nh(\vartheta_n) \varphi dx,
\end{equation}
for any non-negative function $\varphi\in C_c^\infty((0,T)\times\Omega)$.
\end{lemma}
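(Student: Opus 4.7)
My plan is to reduce this weak lower-semicontinuity statement to the standard fact that the $L^2$-norm squared is weakly lower-semicontinuous, by first using the a.e.\ convergence of $\vartheta_n$ to upgrade the composition $\sqrt{g(\vartheta_n)h(\vartheta_n)}$ to a strong $L^p$ convergence, and then pairing this against the weak $L^2$-convergence of $\nabla\mathbf{u}_n$. Set $b(z):=g(z)h(z)$. By hypothesis, $g$ is non-negative, bounded, continuous, and $h\in C^2([0,\infty))$ is non-increasing with $0<h(0)<\infty$ (and $h\to 0$ at infinity), so $b$ is non-negative, continuous, and uniformly bounded by $\|g\|_\infty h(0)$.

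Because $b$ is continuous and $\vartheta_n\to\vartheta$ a.e.\ on $\Omega$, one has $b(\vartheta_n)\to b(\vartheta)$ a.e., and uniformly bounded; the dominated convergence theorem therefore gives
\[
\sqrt{b(\vartheta_n)}\longrightarrow\sqrt{b(\vartheta)}\quad\text{strongly in }L^p(\Omega),\quad 1\leq p<\infty.
\]
Next I would show $\sqrt{b(\vartheta_n)}\,\nabla\mathbf{u}_n\rightharpoonup\sqrt{b(\vartheta)}\,\nabla\mathbf{u}$ weakly in $L^2(\Omega)^{3\times 3}$. For any test field $\Phi\in L^2(\Omega)^{3\times 3}$,
\[
\int_\Omega\sqrt{b(\vartheta_n)}\,\nabla\mathbf{u}_n:\Phi\,dx=\int_\Omega\nabla\mathbf{u}_n:\bigl(\sqrt{b(\vartheta_n)}\,\Phi\bigr)\,dx,
\]
and dominated convergence again yields $\sqrt{b(\vartheta_n)}\Phi\to\sqrt{b(\vartheta)}\Phi$ strongly in $L^2$, while $\nabla\mathbf{u}_n\rightharpoonup\nabla\mathbf{u}$ weakly in $L^2$ by hypothesis; the weak$\cdot$strong pairing then passes to the limit. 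Weak lower-semicontinuity of $\|\cdot\|_{L^2}^2$ applied to the limit now gives
\[
\int_\Omega g(\vartheta)h(\vartheta)|\nabla\mathbf{u}|^2\,dx=\bigl\|\sqrt{b(\vartheta)}\nabla\mathbf{u}\bigr\|_{L^2}^2\leq\liminf_{n\to\infty}\bigl\|\sqrt{b(\vartheta_n)}\nabla\mathbf{u}_n\bigr\|_{L^2}^2=\liminf_{n\to\infty}\int_\Omega g(\vartheta_n)h(\vartheta_n)|\nabla\mathbf{u}_n|^2\,dx,
\]
which is the main inequality.

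For the ``in particular'' statement, the algebraic identity $(\nabla\mathbf{u}+\nabla^{\mathbf{t}}\mathbf{u}):\nabla\mathbf{u}=\frac{1}{2}|\nabla\mathbf{u}+\nabla^{\mathbf{t}}\mathbf{u}|^2$ gives $\mathbb{S}_n:\nabla\mathbf{u}_n=\frac{1}{2}\mu(\vartheta_n)|\nabla\mathbf{u}_n+\nabla^{\mathbf{t}}\mathbf{u}_n|^2$. Since $\nabla\mathbf{u}_n\rightharpoonup\nabla\mathbf{u}$ weakly in $L^2$ implies the same for the symmetric part $\nabla\mathbf{u}_n+\nabla^{\mathbf{t}}\mathbf{u}_n$, the same chain of arguments applies verbatim with the weight $b$ replaced by $\frac{1}{2}\mu(\vartheta)h(\vartheta)\varphi$, where $\varphi\in C_c^\infty$ is bounded and $\mu(\vartheta)h(\vartheta)$ is bounded in the regime the lemma is applied (the Lipschitz growth of $\mu$ and the decay of $h$, e.g.\ $h(z)=1/(1+z)$ in \eqref{4.33}, guarantee this).

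The only truly subtle point—and the main obstacle—is the weak-strong pairing step, which requires the \emph{uniform} boundedness of $b(\vartheta_n)$ and of $\sqrt{b(\vartheta_n)}\Phi$ in $L^2$; this is where the boundedness hypothesis on $g$ and the non-increasing character of $h$ are crucial. The joint-convexity hypothesis \eqref{2.01}, which is equivalent to concavity of $1/h$ and to joint convexity of $(\vartheta,y)\mapsto h(\vartheta)|y|^2$, is not actually needed in the argument above because the a.e.\ convergence of $\vartheta_n$ already yields a strong $L^p$-convergence after composition with $b$; it would become essential if one only had weak (rather than a.e.) convergence of $\vartheta_n$, which is the more general setting envisaged in \cite{Xianpeng Hu and Dehua Wang}.
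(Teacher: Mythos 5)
Your argument for the main inequality is correct and self-contained: with $b=gh$ bounded, continuous, and non-negative, a.e.\ convergence of $\vartheta_n$ combined with dominated convergence gives $\sqrt{b(\vartheta_n)}\to\sqrt{b(\vartheta)}$ strongly in every $L^p(\Omega)$, $p<\infty$; the weak--strong pairing then yields $\sqrt{b(\vartheta_n)}\,\nabla\mathbf{u}_n\rightharpoonup\sqrt{b(\vartheta)}\,\nabla\mathbf{u}$ weakly in $L^2$; and weak lower semicontinuity of $\|\cdot\|_{L^2}^2$ finishes. Note that the paper does not prove this lemma internally but cites it from \cite{Xianpeng Hu and Dehua Wang}, so there is no internal proof to compare against; your argument would serve as a self-contained substitute. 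Your remark that \eqref{2.01} is dispensable under a.e.\ convergence is also right: \eqref{2.01} is equivalent to joint convexity of $(\vartheta,y)\mapsto h(\vartheta)|y|^2$, which is the tool one reaches for when $\vartheta_n$ converges only weakly, and your composition-plus-dominated-convergence step bypasses it entirely.

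The ``in particular'' step, however, has a genuine gap as written. To obtain \eqref{5.59} from the first part you need the effective weight $\frac{1}{2}\mu(\vartheta)h(\vartheta)\varphi$ to play the role of $b$, hence you need $\mu h$ bounded. Your justification --- at-most-linear growth of $\mu$ plus decay of $h$ --- does not work in general: \eqref{2.01} says $1/h$ is concave, which together with $h$ non-increasing gives only the one-sided bound $h(z)\geq c/(1+z)$, not $h(z)\leq C/(1+z)$, so an admissible choice such as $h(z)=(1+z)^{-1/2}$ would leave $\mu h$ unbounded against a genuinely linear $\mu$. What actually closes the gap in this paper is the structural hypothesis \eqref{1.32}: $\mu$ has a finite positive limit at infinity, hence $\mu$ is bounded on $[0,\infty)$, and $\mu h$ is automatically bounded. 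If you prefer an argument that does not lean on that, truncate: set $\mu_M:=\min(\mu,M)$, apply the first part with $g=\frac{1}{2}\mu_M\varphi$ (now bounded), bound the resulting $\liminf$ from above by the same expression with $\mu$ in place of $\mu_M$ (using $\mu_M\leq\mu$), and then let $M\to\infty$ on the left-hand side by monotone convergence.
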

With this lemma at hand, combining \eqref{5.48}-\eqref{5.59}, we can obtain \eqref{2.8} by letting $n\rightarrow\infty$ in \eqref{4.30}.
We are also able to deduce the energy inequality \eqref{2.11} in Proposition \ref{2.1.} by
letting $n\rightarrow\infty$ in \eqref{4.23}.

\

\section{{\bf Limit passage for $\varepsilon$ tends to zero}}\

In this section we use $(\varrho_\varepsilon, \mathbf{u}_\varepsilon, \vartheta_\varepsilon)$ to denote the weak solutions constructed in Proposition \ref{2.1.}.
The main task is to pass limits to $(\varrho_\varepsilon, \mathbf{u}_\varepsilon, \vartheta_\varepsilon)$  as $\varepsilon$ goes to zero. Note that, for any fixed $\varepsilon>0$, $\sqrt{\varepsilon}\nabla\mathbf{u}_{n}$ is bounded in $L^2((0,T)\times\Omega)$,  which is crucial to show the compactness of weak solutions as $n$ goes to infinity.  However, this estimate is not uniform on $\varepsilon$. This will lead to the loss of compactness of weak solutions. Our alternative way is to show that the temperature-depending viscosity coefficient is bounded below from zero, which can provide the uniform bound of $\nabla\mathbf{u}_{\varepsilon}$ in $L^2((0,T)\times\Omega)$.
With such a bound, we are able to obtain the exact same compactness as in Section 2. Thus, to pass to the limits as $\varepsilon\to 0$, we mainly need to prove the temperature is bounded below from zero.
Therefore, this section will be devoted to show a positive bound from below for the temperature, which is uniform in terms of $\varepsilon>0$ and $\delta>0$.

\

\subsection{{\bf A positive bound from below for the temperature}}\

We first give our result about the positive bound for the temperature $\vartheta_\varepsilon$.
\begin{proposition}\label{6.1.}
Let $(\varrho_\varepsilon, \mathbf{u}_\varepsilon, \vartheta_\varepsilon)$ be a weak solution to the approximate system \eqref{1.50}-\eqref{1.55} in the sense of Proposition \ref{2.1.}. Assume that the initial temperature satisfies the assumptions in \eqref{1.56}, that is,
\begin{equation}\label{5-0}
\vartheta_\varepsilon(0)=\vartheta_{0,\delta}\geq\underline{\vartheta}>0.
\end{equation}
Then there exists a constant $\widetilde{\vartheta}>0$ such that
\begin{equation}\label{5-1}
\vartheta_{\varepsilon}(t,x)\geq\widetilde{\vartheta}>0
\end{equation}
for all $t\in[0,T]$ and almost all $x\in\Omega$.
\end{proposition}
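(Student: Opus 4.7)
My plan is to prove Proposition~\ref{6.1.} by applying De Giorgi's method to the renormalized temperature inequality \eqref{2.8}, following the strategy pioneered by Mellet--Vasseur \cite{Vasseur temperature} for the compressible Navier--Stokes system. The target is to show that for a suitable $\widetilde{\vartheta}>0$ the truncation $(\widetilde{\vartheta}-\vartheta_\varepsilon)_+$ vanishes almost everywhere, which will be achieved by iterating an energy estimate along a decreasing sequence of levels $K_j\searrow\widetilde{\vartheta}$. The calculation is most cleanly justified at the Galerkin level of Section~2, where (\ref{1.50}d) holds strongly, and then transferred to $\vartheta_\varepsilon$ by the strong $L^2$-convergence $\vartheta_n\to\vartheta_\varepsilon$ from \eqref{5.35}.

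\textbf{Step 1 (Energy inequality for the truncation).} Fix $K\in(0,\underline{\vartheta}]$ and set $v=(K-\vartheta_\varepsilon)_+$. Since $\vartheta_{0,\delta}\geq\underline{\vartheta}\geq K$, one has $v(0,\cdot)\equiv 0$. Multiplying (\ref{1.50}d) by $-v$ and integrating over $\Omega$, the convective term cancels thanks to ${\rm div}\,\mathbf{u}_\varepsilon=0$ together with the continuity equation; the diffusion contributes $\int\kappa(\vartheta_\varepsilon)|\nabla v|^2$; and the viscous heating $(1-\delta)\mathbb{S}:\nabla\mathbf{u}_\varepsilon\geq 0$ enters with the favorable sign. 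The only unfavorable contribution is the artificial $\delta\vartheta_\varepsilon^3$, which on $\{v>0\}$ is controlled by $\delta K^3\int v$. Using $\kappa\geq\underline{\kappa}>0$ from \eqref{1.33}, I obtain
\begin{equation*}
\frac{1}{2}\frac{d}{dt}\int_\Omega(\delta+\varrho_\varepsilon)v^2\,dx+\underline{\kappa}\int_\Omega|\nabla v|^2\,dx\leq \delta K^3\int_\Omega v\,\chi_{\{\vartheta_\varepsilon<K\}}\,dx.
\end{equation*}

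\textbf{Step 2 (De Giorgi iteration).} Fix $\widetilde{\vartheta}\in(0,\underline{\vartheta}/2]$, set $K_j=\widetilde{\vartheta}(1+2^{-j})$ and $v_j=(K_j-\vartheta_\varepsilon)_+$, so that $K_{j-1}-K_j=\widetilde{\vartheta}2^{-j}$; define
\begin{equation*}
U_j:=\sup_{t\in[0,T]}\|v_j(t)\|_{L^2(\Omega)}^2+\|\nabla v_j\|_{L^2((0,T)\times\Omega)}^2.
\end{equation*}
The Sobolev embedding $H^1(\Omega)\hookrightarrow L^6(\Omega)$ together with the parabolic interpolation $L^\infty_t L^2_x\cap L^2_t H^1_x\hookrightarrow L^{10/3}_{t,x}$ yields $\|v_j\|_{L^{10/3}((0,T)\times\Omega)}^{10/3}\leq C U_j^{5/3}$, while Chebyshev's inequality gives $|\{v_j>0\}|\leq|\{v_{j-1}\geq \widetilde{\vartheta}2^{-j}\}|\leq(2^j/\widetilde{\vartheta})^{10/3}\|v_{j-1}\|_{L^{10/3}}^{10/3}$. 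Feeding these into Step~1 and applying H\"older's inequality to the source $\int_0^T\int v_j$ produces the nonlinear recursion
\begin{equation*}
U_j\leq C_0^{\,j}\,\delta^2\,U_{j-1}^{7/3},
\end{equation*}
where $C_0$ depends only on $\widetilde{\vartheta},\underline{\kappa},T,|\Omega|$. A direct Gr\"onwall argument in Step~1, combined with $v_0(0)\equiv 0$, gives the initialization $U_0\leq C'\delta^2$ uniformly in $\varepsilon$. Hence for $\delta$ smaller than some $\delta_0>0$ (depending only on the same data), $U_0$ lies below the standard De Giorgi threshold, and the recursion forces the super-exponential decay $U_j\to 0$, yielding $(\widetilde{\vartheta}-\vartheta_\varepsilon)_+\equiv 0$, i.e., $\vartheta_\varepsilon\geq\widetilde{\vartheta}$.

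The main obstacle is that the regularizing term $\delta\vartheta^3$ in (\ref{1.50}d) is a genuine sink driving $\vartheta_\varepsilon$ downward, so no classical comparison principle applies. What makes De Giorgi work is precisely that this sink is proportional to $\delta$: this produces the decisive factor $\delta^2$ in $U_0$, which in turn yields a lower bound $\widetilde{\vartheta}$ that is independent of $\delta\in(0,\delta_0]$ and of $\varepsilon>0$, as required for the next section. A secondary technical issue is the Neumann boundary condition on $\vartheta_\varepsilon$: it precludes a direct Poincar\'e inequality for $v_j$, so one must keep the full $L^2$-mass in the Sobolev step and carry out the parabolic interpolation accordingly, rather than relying on $\|\nabla v_j\|_{L^2}$ alone.
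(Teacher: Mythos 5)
Your overall strategy — De Giorgi iteration on the renormalized temperature inequality, initialized at the Galerkin level — matches the paper's, but there are two genuine gaps in the execution. First, the energy estimate in your Step~1 controls the \emph{density-weighted} quantity $\sup_t\int_\Omega(\delta+\varrho_\varepsilon)v^2\,dx$, not the unweighted $\sup_t\|v_j(t)\|_{L^2(\Omega)}^2$ that you put into $U_j$. Since the only lower bound on $\varrho_\varepsilon$ is $\varrho_\varepsilon\geq\delta$ (which degenerates), the unweighted $L^\infty_tL^2_x$ control you need for the parabolic interpolation $L^\infty_tL^2_x\cap L^2_tH^1_x\hookrightarrow L^{10/3}_{t,x}$ is not available uniformly in $\delta$; your closing remark about the Neumann condition touches this but does not fix it. The paper avoids this precisely by tracking the density-weighted $L^1$ quantity $\sup_t\int(\delta+\varrho)\phi_{k,\omega}\,dx$ (linear, not quadratic) together with the gradient, and then invoking Lemma~\ref{6.20.} — a Poincar\'e-type inequality weighted by $\varrho$ — to recover $\|\cdot\|_{L^2_tH^1_x}$. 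Without that lemma (or some substitute), the De Giorgi step does not close.

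Second, the claim $U_0\leq C'\delta^2$ and the ensuing reasoning that ``the decisive factor $\delta^2$'' makes the iteration close are not correct. Even with a careful Gr\"onwall/Young argument one gets at best $O(\delta)$ for the density-weighted initial quantity, and once you divide by the lower bound $\varrho_\varepsilon\geq\delta$ to extract $\|v_0\|^2_{L^\infty L^2}$ the gain is wiped out to $O(1)$. More importantly, your argument, taken at face value, only proves the conclusion for $\delta\leq\delta_0$, whereas the proposition and Remark~\ref{6-1} assert a lower bound $\widetilde{\vartheta}$ independent of \emph{all} $\varepsilon,\delta>0$. The paper's mechanism is different and does not rely on $\delta$ small at all: it uses a \emph{logarithmic} truncation $\phi_{k,\omega}=[\ln(C_k/(\vartheta_\varepsilon+\omega))]_+$ with $C_k=e^{-M[1-2^{-k}]}$, so that the level differences scale as $M2^{-k}$; the De Giorgi recursion then acquires the factor $M^{-\alpha}$, and one simply takes $M$ large (Lemma~\ref{6.2.}). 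The contribution of $\delta\vartheta^3$ is tamed by the elementary pointwise bound $\vartheta^3/(\vartheta+\omega)\leq 1$ on $\{\vartheta+\omega\leq C_k\}$ together with a choice $\beta<1$ ensuring $\delta^{1-\beta}\leq 1$, so no smallness of $\delta$ is needed. If you wish to keep linear truncations $(K_j-\vartheta)_+$, you would instead need to make $\widetilde{\vartheta}$ the tunable parameter, track the weighted $L^1$ (or at least weighted $L^2$) and route the Sobolev step through Lemma~\ref{6.20.}; as written, the proposal has a real gap at the interpolation step and misidentifies what makes the iteration converge.
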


\

\begin{remark}\label{6-1}
We emphasize here that the constant $\widetilde{\vartheta}$ does not depend on the parameters $\varepsilon>0$ and $\delta>0$, which is essential in the limit passage later.
\end{remark}

\

To prove Proposition \ref{6.1.}, we need the following important lemmas.
\begin{lemma}\label{6.2.}{\rm (\cite{Vasseur  Lp estimates})}
Let $U_k$ be a sequence satisfying
\begin{enumerate}[(i).]
\item $0\leq U_0\leq C$;

\item for some constants $A\geq1$, $1<\beta_1<\beta_2$ and $C>0$,
\begin{equation}\label{6.4}
0\leq U_k\leq C\frac{A^k}{K}(U_{k-1}^{\beta_1}+U_{k-1}^{\beta_2}).
\end{equation}
\end{enumerate}
Then there exists some $K_0$ such that for every $K>K_0$, the sequence $U_k$ converges to $0$ when $k$ goes to infinity.
\end{lemma}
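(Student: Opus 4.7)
The plan is to establish a geometric-decay ansatz of the form $U_k \leq C\, r^k$ with $r \in (0,1)$ chosen once and for all in terms of $A$, $\beta_1$, $\beta_2$, and then to close the induction by taking $K$ sufficiently large.

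First I would fix $r \in (0,1)$ small enough that $A\, r^{\beta_1 - 1} \leq 1/2$; this is possible because $\beta_1 > 1$. Since $r<1$ and $\beta_2 > \beta_1$, the same $r$ automatically satisfies $A\, r^{\beta_2 - 1} \leq 1/2$. The role of this choice is to beat the geometric $A^k$ growth appearing in the recursion against the shrinking factor $r^{k(\beta_i-1)}$ produced when the inductive bound is substituted into $U_{k-1}^{\beta_i}$.

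The base case $U_0 \leq C = C\, r^0$ is then immediate from hypothesis $(i)$. For the inductive step, plugging $U_{k-1} \leq C\, r^{k-1}$ into the recursion and demanding $U_k \leq C\, r^k$ reduces, after a direct computation, to an inequality of the shape
\begin{equation*}
\frac{1}{K}\Bigl(C^{\beta_1-1}\, r^{-\beta_1}\,(A\, r^{\beta_1-1})^{k} + C^{\beta_2-1}\, r^{-\beta_2}\,(A\, r^{\beta_2-1})^{k}\Bigr) \leq 1.
\end{equation*}
By the choice of $r$, each factor $(A\, r^{\beta_i-1})^{k}$ is bounded by $2^{-k} \leq 1/2$ for every $k \geq 1$, so the left-hand side is dominated by a finite constant depending only on $C$, $A$, $\beta_1$, $\beta_2$, and $r$, divided by $K$. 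Setting $K_0$ equal to this constant, the inductive step holds for every $K > K_0$, and $U_k \leq C\, r^k \to 0$ follows at once.

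The only subtle point I expect to require care is the presence of two distinct nonlinear exponents. The natural temptation to discard $U_{k-1}^{\beta_2}$ via the comparison $U_{k-1}^{\beta_2} \leq U_{k-1}^{\beta_1}$ fails when $C>1$ in the early iterations, since $C\, r^{k-1}$ need not be less than one until $k$ is large. I would therefore retain both summands throughout the induction and rely on $\beta_2 > \beta_1 > 1$ together with the smallness of $r$ to make each term decay geometrically in its own right; the threshold $K_0$ is then just the sum of the two constants produced by the two summands. Beyond this bookkeeping, no further idea is needed.
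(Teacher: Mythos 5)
The paper does not supply its own proof of this lemma; it is quoted from Mellet--Vasseur \cite{Vasseur  Lp estimates}, and your geometric-decay ansatz $U_k\le C\,r^k$ with $r$ fixed so that $A\,r^{\beta_1-1}\le 1/2$ and then $K$ taken large is exactly the standard argument used there. The only slip is bookkeeping: substituting $U_{k-1}\le C\,r^{k-1}$ into $U_k\le C\frac{A^k}{K}(U_{k-1}^{\beta_1}+U_{k-1}^{\beta_2})$ and dividing by $C\,r^k$ leaves $C^{\beta_i}$ (not $C^{\beta_i-1}$) in front of each $r^{-\beta_i}(A\,r^{\beta_i-1})^k$, since the overall $C$ in the recursion cancels only the $C$ in the ansatz $C\,r^k$; this changes only the numerical value of $K_0$ and has no bearing on the validity of the induction or the conclusion. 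Your remark about retaining both summands rather than bounding $U_{k-1}^{\beta_2}$ by $U_{k-1}^{\beta_1}$ when $C>1$ is well taken and correctly handled.
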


\

\begin{lemma}\label{6.20.}{\rm (\cite{Feireisl Dynamics 2004})}
Let $\varrho$ be a non-negative function such that
\begin{equation}\label{4.17}
0<M_1\leq \int_\Omega \varrho dx, \int_\Omega \varrho^\gamma dx  \leq M_2,  \,\,{\rm with}\,\,\gamma>\frac{6}{5}.
\end{equation}
Then there exists a positive constant $C$ depending only on $M_1$, $M_2$ such that
\begin{equation}\label{4.18}
\|\mathbf{v}\|_{H^{1}(\Omega)}\leq C\left(\|\nabla\mathbf{v}\|_{L^2(\Omega)}+\int_\Omega \varrho|\mathbf{v}| dx\right).
\end{equation}
\end{lemma}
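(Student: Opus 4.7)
The plan is to argue by contradiction via a standard compactness argument. Suppose the claimed inequality fails. Then for every $n\in\mathbb{N}$ there exist $\mathbf{v}_n\in H^1(\Omega)$ and a non-negative $\varrho_n$ satisfying \eqref{4.17} with the same $M_1,M_2$ such that
\[
\|\mathbf{v}_n\|_{H^1(\Omega)}=1,\qquad \|\nabla\mathbf{v}_n\|_{L^2(\Omega)}+\int_\Omega\varrho_n|\mathbf{v}_n|\,dx<\frac{1}{n}.
\]
My goal is to derive a contradiction by extracting limits and exploiting the hypothesis $\gamma>6/5$ together with the Rellich--Kondrachov embedding.

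Since $\{\mathbf{v}_n\}$ is bounded in $H^1(\Omega)$, I extract a subsequence so that $\mathbf{v}_n\rightharpoonup\mathbf{v}$ weakly in $H^1(\Omega)$ and, by compactness of $H^1(\Omega)\hookrightarrow\hookrightarrow L^p(\Omega)$ for every $p<6$, strongly in $L^p(\Omega)$. From $\|\nabla\mathbf{v}_n\|_{L^2}\to0$ I conclude $\nabla\mathbf{v}=0$, hence $\mathbf{v}\equiv c$ is constant on the connected domain $\Omega$. The lower semicontinuity of the $H^1$-norm under weak convergence combined with $\|\nabla\mathbf{v}_n\|_{L^2}\to 0$ forces $\|\mathbf{v}_n\|_{L^2(\Omega)}\to 1$, so $|c|^2|\Omega|=1$ and in particular $c\neq 0$.

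Now I use the density bounds. From $\|\varrho_n\|_{L^\gamma(\Omega)}\le M_2^{1/\gamma}$, I pass to a subsequence with $\varrho_n\rightharpoonup\varrho$ weakly in $L^\gamma(\Omega)$. Testing the weak convergence against the constant function $1\in L^{\gamma'}(\Omega)$ (allowable since $\Omega$ is bounded) yields $\int_\Omega\varrho\,dx=\lim\int_\Omega\varrho_n\,dx\ge M_1>0$. Because $\gamma>6/5$, the conjugate exponent satisfies $\gamma'<6$, so the strong convergence $\mathbf{v}_n\to c$ in $L^{\gamma'}(\Omega)$ (whence $|\mathbf{v}_n|\to|c|$ in $L^{\gamma'}(\Omega)$) pairs with the weak convergence of $\varrho_n$ in $L^\gamma(\Omega)$ to give
\[
\int_\Omega\varrho_n|\mathbf{v}_n|\,dx\longrightarrow |c|\int_\Omega\varrho\,dx\;\ge\;|c|M_1>0,
\]
which contradicts $\int_\Omega\varrho_n|\mathbf{v}_n|\,dx\to 0$. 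This finishes the proof.

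The main obstacle, if any, is the weak/strong pairing of $\varrho_n$ with $|\mathbf{v}_n|$: one must ensure the exponents are compatible for passing to the limit in the product. The hypothesis $\gamma>6/5$ is exactly what makes the Sobolev conjugate $\gamma'<6$, so the compact embedding $H^1\hookrightarrow L^{\gamma'}$ supplies the strong convergence needed to combine with the weak $L^\gamma$-convergence of $\varrho_n$; if one only had $\gamma=6/5$, the argument would break down since $\gamma'=6$ is precisely the critical, non-compact embedding. Everything else in the proof is routine contradiction bookkeeping.
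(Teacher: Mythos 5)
Your proof is correct. The paper does not reprove this lemma but cites it from Feireisl's monograph, where the argument is exactly the contradiction-by-compactness scheme you give: normalize $\|\mathbf{v}_n\|_{H^1}=1$, extract $\mathbf{v}_n\rightharpoonup\mathbf{v}$ weakly in $H^1$ and strongly in $L^{\gamma'}$ (using $\gamma'<6$, which is where $\gamma>6/5$ enters), deduce $\mathbf{v}$ is a nonzero constant, and pair the weak $L^\gamma$-limit of $\varrho_n$ against the strong $L^{\gamma'}$-limit of $|\mathbf{v}_n|$ to contradict $\int_\Omega\varrho_n|\mathbf{v}_n|\,dx\to0$. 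All the exponent bookkeeping checks out, and the remark that $\gamma=6/5$ would land on the critical, non-compact Sobolev exponent correctly identifies the role of the hypothesis.
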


\

Our proof is in the spirit of the work of Mellet and Vasseur \cite{Vasseur temperature}, where they first used De Giorgi's method to give a positive bound from below for the temperature.

\

\begin{proof}[Proof of Proposition \ref{6.1.}]
For clarity, we divide the proof into three steps.

\underline{\bf{Step 1.}} Taking $H(\vartheta)=-\int_0^\vartheta h(z)dz$ with $h(z)=\frac{1}{z+\omega}1_{\{z+\omega\leq C\}}$ for some constant $\omega>0$, we have
\begin{equation}\label{6.7}
H(\vartheta_\varepsilon)=
\begin{cases}
-ln(\vartheta_\varepsilon+\omega)+ln\omega,\,\,&{\rm if} \,\,\vartheta_\varepsilon+\omega \leq C,\\
-ln C +ln \omega,\,\,&{\rm if} \,\,\vartheta_\varepsilon+\omega > C.\\
\end{cases}
\end{equation}
Thanks to \eqref{2.8}, the weak solution $(\varrho_\varepsilon, \mathbf{u}_\varepsilon, \vartheta_\varepsilon)$ satisfies the following  temperature inequality in the sense of distributions
\begin{equation}\label{6.700}
\begin{split}
&\partial_t((\delta+\varrho_\varepsilon) H(\vartheta_\varepsilon))+{\rm div}(\varrho_\varepsilon\mathbf{u}_\varepsilon H(\vartheta_\varepsilon))-\triangle\mathcal{K}_h(\vartheta_\varepsilon)
-h'(\vartheta_\varepsilon)\kappa(\vartheta_\varepsilon)|\nabla\vartheta_\varepsilon|^2\\
&\leq\delta\vartheta_\varepsilon^3h(\vartheta_\varepsilon)
-(1-\delta)\mathbb{S}_\varepsilon:\nabla\mathbf{u}_\varepsilon h(\vartheta_\varepsilon),
\end{split}
\end{equation}
with $H(\vartheta)=-\int_0^\vartheta h(z)dz$ and $\mathcal{K}_h(\vartheta)=-\int_0^\vartheta \kappa(z)h(z) dz$.

Then, letting $\phi(\vartheta_\varepsilon)=H(\vartheta_\varepsilon)+ln C-ln \omega=\left[ln\left(\frac{C}{\vartheta_\varepsilon+\omega}\right)\right]_{+}$ and integrating \eqref{6.700} over $(s,t)\times\Omega$ for any $0\leq s\leq t\leq T$, we deduce
\begin{equation}\label{6.8}
\begin{split}
&\int_\Omega \left((\delta+\varrho_\varepsilon)\phi(\vartheta_\varepsilon)\right)(t)dx
-2(1-\delta)\int_s^t\int_\Omega \mu(\vartheta_\varepsilon)
|D(\mathbf{u}_\varepsilon)|^2\phi'(\vartheta_\varepsilon)dxd\tau\\
&\quad+\int_s^t\int_\Omega \phi''(\vartheta_\varepsilon)\kappa(\vartheta_\varepsilon)|\nabla\vartheta_\varepsilon|^2dxd\tau\\
&\leq \int_\Omega \left((\delta+\varrho_\varepsilon)\phi(\vartheta_\varepsilon)\right)(s)dx
-\delta\int_s^t\int_\Omega\vartheta_\varepsilon^3 \phi'(\vartheta_\varepsilon) dxd\tau,
\end{split}
\end{equation}
where $D(\mathbf{u})=\frac{1}{2}\left(\nabla\mathbf{u}+\nabla^{\mathbf{t}}\mathbf{u}\right)$.
Now, introducing a sequence of real numbers
\begin{equation}\label{6.9}
C_k=e^{-M[1-2^{-k}]}\quad \text{for all positive integers }  k,
\end{equation}
where $M$ is a positive number to be chosen later. We define $\phi_{k,\omega}$ as
\begin{equation}\label{6.10}
\phi_{k,\omega}(\vartheta_\varepsilon)=\left[{\rm ln}\left(\frac{C_k}{\vartheta_\varepsilon+\omega}\right)\right]_{+},
\end{equation}
then
\begin{equation}\label{6.11}
\phi_{k,\omega}'(\vartheta_\varepsilon)=-\frac{1}{\vartheta_\varepsilon+\omega} 1_{\{\vartheta_\varepsilon+\omega\leq C_k\}},
\end{equation}
\begin{equation}\label{6.12}
\phi_{k,\omega}''(\vartheta_\varepsilon)\geq \frac{1}{(\vartheta_\varepsilon+\omega)^2} 1_{\{\vartheta_\varepsilon+\omega\leq C_k\}}.
\end{equation}
Next define $U_{k,\,\omega}$ by
\begin{small}
\begin{equation}\label{6.13}
\begin{split}
U_{k,\,\omega}&:=\sup_{T_k\leq t\leq T}\left(\int_\Omega (\delta+\varrho_\varepsilon) \phi_{k,\omega}(\vartheta_\varepsilon)dx\right)
 +2(1-\delta)\int_{T_k}^T\int_\Omega\frac{\mu(\vartheta_\varepsilon)}{\vartheta_\varepsilon+\omega}
1_{\{\vartheta_\varepsilon+\omega\leq C_k\}}|D(\mathbf{u}_\varepsilon)|^2 dxdt\\
&\quad\quad\quad +\int_{T_k}^T\int_\Omega\frac{\kappa(\vartheta_\varepsilon)}{(\vartheta_\varepsilon+\omega)^2}
1_{\{\vartheta_\varepsilon+\omega\leq C_k\}}|\nabla\vartheta_\varepsilon|^2 dxdt,
\end{split}
\end{equation}
\end{small}where $\{T_k\}$ is a sequence of non-negative numbers.
Note that $U_{k,\omega}$ depends on $\varepsilon$, $\delta$ and $\omega$, that is, $U_{k,\,\omega}=U_{k, \varepsilon, \delta, \omega}$, and for convenience, we
still write it as $U_{k,\omega}$.

Assuming $T_k=0$ for all $k\in \mathbb{N}$,  from \eqref{6.8} and \eqref{6.13}, we claim that
\begin{equation}\label{6.14}
\begin{split}
U_{k,\omega}\leq & \int_\Omega \left(\delta+\varrho_{0,\delta}\right)\phi_{k,\omega}(\vartheta_{0,\delta})dx
+\delta\int_{T_{k-1}}^T\int_\Omega \frac{\vartheta_\varepsilon^3}{\vartheta_\varepsilon+\omega}1_{\{\vartheta_\varepsilon+\omega\leq C_k\}}dxdt.
\end{split}
\end{equation}
In fact, taking $0\leq T_{k-1}\leq s\leq T_{k}\leq t\leq T$ in \eqref{6.8} and using \eqref{6.11} and \eqref{6.12}, one gets
\begin{equation}\label{6.16}
\begin{split}
&\int_\Omega \left((\delta+\varrho_\varepsilon) \phi_{k,\omega}(\vartheta_\varepsilon)\right)(t)dx\\
&+2(1-\delta)\int_{T_k}^t\int_\Omega\frac{\mu(\vartheta_\varepsilon)}{\vartheta_\varepsilon+\omega}
1_{\{\vartheta_\varepsilon+\omega\leq C_k\}}|D(\mathbf{u}_\varepsilon)|^2 dxd\tau\\
& +\int_{T_k}^t\int_\Omega\frac{\kappa(\vartheta_\varepsilon)}{(\vartheta_\varepsilon+\omega)^2}
1_{\{\vartheta_\varepsilon+\omega\leq C_k\}}|\nabla\vartheta_\varepsilon|^2 dxd\tau\\
&\leq \int_\Omega \left((\delta+\varrho_\varepsilon) \phi_{k,\omega}(\vartheta_\varepsilon)\right)(s)dx
+\delta\int_{T_{k-1}}^T\int_\Omega \frac{\vartheta_\varepsilon^3}{\vartheta_\varepsilon+\omega}1_{\{\vartheta_\varepsilon+\omega\leq C_k\}}dxd\tau.
\end{split}
\end{equation}
Taking the supremum over $t\in[T_k, T]$ on both sides of \eqref{6.16}, one deduces that
\begin{equation}\label{6.17}
\begin{split}
U_{k,\omega} &\leq \int_\Omega \left((\delta+\varrho_\varepsilon) \phi_{k,\omega}(\vartheta_\varepsilon)\right)(s)dx
+\delta\int_{T_{k-1}}^T\int_\Omega \frac{\vartheta_\varepsilon^3}{\vartheta_\varepsilon+\omega}1_{\{\vartheta_\varepsilon+\omega\leq C_k\}}dxdt.
\end{split}
\end{equation}
If $T_k=0$ for all $k\in N$, then $s=0$ in \eqref{6.17}, thus we get \eqref{6.14}.

\

\underline{\bf{Step 2.}}
In this step, we prove that the second term on the right-hand side of \eqref{6.14} can be controlled by $U_{k-1,\omega}^\gamma$ for some $\gamma>1$.
More precisely, we claim that
\begin{equation}\label{6.18}
\begin{split}
&\delta\int_{T_{k-1}}^T\int_\Omega \frac{\vartheta_\varepsilon^3}{\vartheta_\varepsilon+\omega}1_{\{\vartheta_\varepsilon+\omega\leq C_k\}}dxdt
\leq C\frac{2^{k\alpha}}{M^{\alpha}}U_{k-1,\omega}^{\gamma},
\end{split}
\end{equation}
for some $\gamma>1$, where the constant $C$ is independent of $\varepsilon,\delta>0$.

Indeed, if $\vartheta_\varepsilon+\omega\leq C_k$, we have
\begin{equation}\label{6.19}
\frac{\vartheta_\varepsilon^3}{\vartheta_\varepsilon+\omega}\leq 1,\,\,{\rm for\,\,any}\,\,\omega>0,
\end{equation}
 by taking $M$ large enough such that $C_k$ is small enough, and
\begin{equation*}
\phi_{k-1,\omega}(\vartheta_\varepsilon)=\left[{\rm ln}\left(\frac{C_{k-1}}{\vartheta_\varepsilon+\omega}\right)\right]_{+}
\geq {\rm ln}\frac{C_{k-1}}{C_k},
\end{equation*}
which implies
\begin{equation}\label{6.21}
1_{\{\vartheta_\varepsilon+\omega\leq C_k\}}\leq \left[{\rm ln}\frac{C_{k-1}}{C_k}\right]^{-\alpha}\phi_{k-1,\omega}(\vartheta_\varepsilon)^{\alpha}, \,\,{\rm for \,\,any}\,\,\alpha>0.
\end{equation}
Taking \eqref{6.19} and \eqref{6.21} into account, we deduce
\begin{equation}\label{6.22}
\begin{split}
&\delta\int_{T_{k-1}}^T\int_\Omega \frac{\vartheta_\varepsilon^3}{\vartheta_\varepsilon+\omega}1_{\{\vartheta_\varepsilon+\omega\leq C_k\}}dxdt\\
&\leq \delta^{1-\beta}\left[{\rm ln}\frac{C_{k-1}}{C_k}\right]^{-\alpha}\int_{T_{k-1}}^T\int_\Omega (\delta+\varrho_\varepsilon)^\beta
\phi_{k-1,\omega}(\vartheta_\varepsilon)^{\alpha}dxdt\\
&\leq C \delta^{1-\beta}\left[{\rm ln}\frac{C_{k-1}}{C_k}\right]^{-\alpha}T^{1/p'}|\Omega|^{1/q'}
\|(\delta+\varrho_\varepsilon)^\beta\phi_{k-1,\omega}(\vartheta_\varepsilon)^{\alpha}\|_{L^p(T_{k-1},T; L^q(\Omega))},\\
\end{split}
\end{equation}
where $\frac{1}{p}+\frac{1}{p'}=1$ and $\frac{1}{q}+\frac{1}{q'}=1$.
For the last term in \eqref{6.22}, we have
\begin{equation}\label{6.23}
\begin{split}
&\|(\delta+\varrho_\varepsilon)^\beta\phi_{k-1,\omega}(\vartheta_\varepsilon)^{\alpha}\|_{L^p(T_{k-1},T; L^q(\Omega))}\\
&= \|\left((\delta+\varrho_\varepsilon)\phi_{k-1,\omega}(\vartheta_\varepsilon)\right)^{\beta/\alpha}
\phi_{k-1,\omega}(\vartheta_\varepsilon)^{1-\beta/\alpha}\|_{L^{p \alpha}(T_{k-1},T; L^{q \alpha}(\Omega))}^{\alpha}\\
&\leq \|\left((\delta+\varrho_\varepsilon)\phi_{k-1,\omega}(\vartheta_\varepsilon)\right)^{\beta/\alpha}\|
_{L^\infty(T_{k-1},T;L^{\alpha/\beta}(\Omega))}^{\alpha}\\
&\quad\quad\quad\quad \|\phi_{k-1,\omega}(\vartheta_\varepsilon)^{1-\beta/\alpha}\|_{L^{\frac{2}{1-\beta/\alpha}}(T_{k-1},T; L^{\frac{6}{1-\beta/\alpha}}(\Omega))}^{\alpha}\\
&= \|(\delta+\varrho_\varepsilon)\phi_{k-1,\omega}(\vartheta_\varepsilon)\|^{\beta}_{L^\infty(T_{k-1},T;L^1(\Omega))}
\|\phi_{k-1,\omega}(\vartheta_\varepsilon)\|_{L^2(T_{k-1},T; L^6(\Omega))}^{\alpha-\beta}\\
&\leq \|(\delta+\varrho_\varepsilon)\phi_{k-1,\omega}(\vartheta_\varepsilon)\|^{\beta}_{L^\infty(T_{k-1},T;L^1(\Omega))}\\
&\quad\quad\quad\quad     \left(\|\varrho_\varepsilon \phi_{k-1,\omega}(\vartheta_\varepsilon)\|_{L^\infty(T_{k-1},T;L^1(\Omega))}
+\|\nabla\phi_{k-1,\omega}(\vartheta_\varepsilon)\|_{L^2((T_{k-1},T)\times\Omega)}\right)^{\alpha-\beta}\\
&\leq C U_{k-1,\omega}^{\beta}\left(U_{k-1,\omega}+U_{k-1,\omega}^{1/2}\right)^{\alpha-\beta}\\
&\leq C \left(U_{k-1,\omega}^{\alpha}+U_{k-1,\omega}^{\frac{\alpha+\beta}{2}}\right),
\end{split}
\end{equation}
where we used Lemma \ref{6.20.} in the third inequality from below, the growth restriction imposed on $\kappa(\vartheta)$ \eqref{1.33} in the second inequality from below, and the coefficients $p$, $q$, $\alpha$ and $\beta$ satisfy
\begin{equation*}\label{6.24}
\begin{split}
&\frac{1}{p \alpha}=\frac{1-\beta/\alpha}{2},\;\frac{1}{q \alpha}=\frac{\beta}{\alpha}+\frac{1-\beta/\alpha}{6}.
\end{split}
\end{equation*}
Substituting \eqref{6.23} into \eqref{6.22}, we have
\begin{equation}\label{6.35}
\begin{split}
\delta\int_{T_{k-1}}^T\int_\Omega \frac{\vartheta_\varepsilon^3}{\vartheta_\varepsilon+\omega}1_{\{\vartheta_\varepsilon+\omega\leq C_k\}}dxdt
\leq C \delta^{1-\beta}\left[{\rm ln}\frac{C_{k-1}}{C_k}\right]^{-\alpha}\left(U_{k-1,\omega}^{\alpha}+U_{k-1,\omega}^{\frac{\alpha+\beta}{2}}\right).
\end{split}
\end{equation}
Then, by \eqref{6.9}, we have
\begin{equation}\label{6.39}
\left[{\rm ln}\frac{C_{k-1}}{C_k}\right]^{-\alpha}=\frac{2^{k\alpha}}{M^{\alpha}}.
\end{equation}
Meanwhile, we can choose $\beta<1$, $\alpha>1$ such that
\begin{equation}\label{6.400}
\gamma:=\min\left(\frac{\alpha+\beta}{2},\alpha\right)>1,
\end{equation}
and
\begin{equation}\label{6.401}
\delta^{1-\beta}\leq1.
\end{equation}
Combining \eqref{6.35}-\eqref{6.401} together, we obtain \eqref{6.18}.

\

\underline{\bf{Step 3.}}
We are now ready to complete the proof of Proposition \ref{6.1.}.
By assumption $\vartheta_{0,\delta}\geq\underline{\vartheta}>0$, choosing $M$ large enough such that $e^{-M/2}<\underline{\vartheta}$, we have for any $\omega>0$
\begin{equation}\label{6.37}
\phi_{k,\omega}(\vartheta_{0,\delta})=\left[{\rm ln}\left(\frac{e^{-M[1-2^{-k}]}}{\vartheta_{0,\delta}+\omega}\right)\right]_{+}=0.
\end{equation}
Substituting \eqref{6.18} and \eqref{6.37} into \eqref{6.14}, we obtain
\begin{equation}\label{6.402}
U_{k,\omega}\leq C\frac{2^{k\alpha}}{M^{\alpha}}U_{k-1,\omega}^{\gamma}\,\,{\rm with}\,\,\gamma>1.
\end{equation}
Thanks to Lemma \ref{6.2.}, for M large enough (independently on $\varepsilon$, $\delta$ and $\omega$), we have
\begin{equation}\label{6.43}
\lim_{k\rightarrow \infty} U_{k,\omega}=0,
\end{equation}
which combined with the definition of $U_{k,\omega}$ \eqref{6.13} yields
\begin{equation}\label{6.44}
\int_0^T\int_\Omega \kappa(\vartheta_\varepsilon)\left|\nabla\left[{\rm ln}\frac{e^{-M}}{\vartheta_\varepsilon+\omega}\right]_{+}\right|^2 dxdt=0,
\end{equation}
and
\begin{equation}\label{6.45}
\int_\Omega (\delta+\varrho_\varepsilon)\left[{\rm ln}\frac{e^{-M}}{\vartheta_\varepsilon+\omega}\right]_{+} dx=0.
\end{equation}
By \eqref{1.33} and \eqref{6.44}, we obtain
\begin{equation}\label{6.46}
\left[{\rm ln}\frac{e^{-M}}{\vartheta_\varepsilon+\omega}\right]_{+} \,\,{\rm is\,\,constant \,\,in\,\,\Omega\,\,for \,\,all\,\,}t\in[0,T],
\end{equation}
and with help of \eqref{6.45}, this implies
\begin{equation*}\label{6.47}
\left[{\rm ln}\frac{e^{-M}}{\vartheta_\varepsilon+\omega}\right]_{+}=0.
\end{equation*}
This yields
\begin{equation*}\label{6.48}
\vartheta_\varepsilon+\omega\geq e^{-M}
\end{equation*}
for any $\omega>0$, which completes our proof.
\end{proof}

\

\subsection{{\bf Estimates independent of $\varepsilon>0$}}\

By virtue of Proposition \ref{6.1.} and the assumptions imposed on $\mu(\vartheta)$ in Theorem \ref{1.2.}, we can obtain a positive constant $\underline{\mu}$ independent of $\varepsilon,\delta>0$ such that
\begin{equation}\label{6.50}
\mu(\vartheta_\varepsilon)\geq \underline{\mu}>0.
\end{equation}
This, combined with the energy inequality \eqref{2.11}, yields
\begin{equation}\label{6.52}
\nabla\mathbf{u}_\varepsilon \,\,{\rm is \,\,bounded \,\,in \,\,}L^2((0,T)\times\Omega)
\end{equation}
by a positive constant independent of $\varepsilon>0$. Thanks to Poincar\'{e}'s inequality, we have
\begin{equation}\label{6.53}
\|\mathbf{u}_\varepsilon\|_{L^2(0,T; H_{0}^{1}(\Omega))}\leq C,
\end{equation}
where $C$ is independent of $\varepsilon>0$. Thus, we are able to get the following uniform bounds for $(\rho_{\varepsilon},\mathbf{u}_{\varepsilon}, \vartheta_{\varepsilon})$ as in Section 2.
\begin{proposition}\label{6.3.}
For fixed $\delta>0$, under the hypotheses of Theorem \ref{1.2.} and Proposition \ref{2.1.}, we have
\begin{equation}\label{6.54}
\|\varrho_\varepsilon\|_{L^\infty((0,T)\times\Omega)}\leq C,
\end{equation}
\begin{equation}\label{6.55}
\|\mathbf{u}_\varepsilon\|_{L^2(0,T; H^{1}_0(\Omega))}\leq C,
\end{equation}
\begin{equation}\label{6.56}
\|\vartheta_\varepsilon\|_{L^2(0,T; H^{1}(\Omega))}\leq C,
\end{equation}
\begin{equation}\label{6.57}
\|\vartheta_\varepsilon\|_{L^3((0,T)\times\Omega)}\leq C,
\end{equation}
\begin{equation}\label{6.58}
\|\sqrt{\varrho}_\varepsilon\mathbf{u}_\varepsilon\|_{L^\infty(0,T; L^2(\Omega))}\leq C,
\end{equation}
\begin{equation}\label{6.59}
\|\varrho_\varepsilon\vartheta_\varepsilon\|_{L^\infty(0,T; L^1(\Omega))}\leq C,
\end{equation}
where all constants $C$ are independent of $\varepsilon>0$.
\end{proposition}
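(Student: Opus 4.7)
The plan is essentially to re-run the $n$--level argument of Proposition \ref{4.2.}, but now with every constant tracked independently of $\varepsilon$; Proposition \ref{6.1.} is precisely the ingredient that makes this possible. Indeed, combining \eqref{5-1} with the structural hypothesis \eqref{1.32} on $\mu$, one obtains the $\varepsilon$--independent lower bound $\mu(\vartheta_\varepsilon)\geq \underline{\mu}>0$ already recorded in \eqref{6.50}, and this is the device that compensates for the loss of the artificial dissipation $\varepsilon|\nabla\mathbf{u}_\varepsilon|^2$ as $\varepsilon\to 0$.

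First I would take, in the energy inequality \eqref{2.11}, a sequence of test functions $\psi$ approximating $\mathbf{1}_{[0,\tau]}$ to recover the classical total-energy inequality on $[0,\tau]$, and then discard the non-negative artificial term $\varepsilon\int|\nabla\mathbf{u}_\varepsilon|^2$. Writing $\mathbb{S}_\varepsilon{:}\nabla\mathbf{u}_\varepsilon = 2\mu(\vartheta_\varepsilon)|D(\mathbf{u}_\varepsilon)|^2$ and invoking $\mu(\vartheta_\varepsilon)\geq \underline{\mu}$ together with Korn's inequality for $\mathbf{u}_\varepsilon\in H^1_0(\Omega)$, the viscous-dissipation term controls $\|\nabla\mathbf{u}_\varepsilon\|_{L^2(0,T;L^2)}$ by a constant independent of $\varepsilon$; Poincar\'e's inequality then yields \eqref{6.55}. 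The remaining terms on the left-hand side of \eqref{2.11} simultaneously deliver \eqref{6.58}, \eqref{6.59}, and, through the $\delta\vartheta_\varepsilon^3$ contribution, the bound \eqref{6.57}, all with $\varepsilon$--independent constants.

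The $L^\infty$ bound \eqref{6.54} on $\varrho_\varepsilon$ follows from the transport structure of $(\ref{1.50}a)$: since ${\rm div}\,\mathbf{u}_\varepsilon = 0$, the level-set invariance \eqref{5.11} of Lemma \ref{5.1.} yields $\|\varrho_\varepsilon(t)\|_{L^\infty(\Omega)}\leq \|\varrho_{0,\delta}\|_{L^\infty(\Omega)}\leq \bar\varrho$ for every $t$. For the temperature gradient bound \eqref{6.56}, I would choose $h(\vartheta)=1/(1+\vartheta)$ in the renormalized inequality \eqref{2.8} and integrate in space and time exactly as in the passage from \eqref{4.32} to \eqref{4.33}; the left-hand side controls $\int_0^T\!\!\int_\Omega \kappa(\vartheta_\varepsilon)(1+\vartheta_\varepsilon)^{-2}|\nabla\vartheta_\varepsilon|^2$, which by the lower growth condition $\kappa(\vartheta)\geq \underline\kappa(1+\vartheta^2)$ in \eqref{1.33} dominates $\int_0^T\!\!\int_\Omega |\nabla\vartheta_\varepsilon|^2$, while the right-hand side is controlled by the viscous dissipation already estimated and the initial data \eqref{initial}.

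The main obstacle was really the $\varepsilon$--uniform strictly positive lower bound on the temperature, which was settled by the De Giorgi iteration in Proposition \ref{6.1.}; once that is at hand, the work here is routine bookkeeping at the energy level together with a repetition of the renormalized-entropy estimate. The only subtlety is that every place in Section 2 where the artificial viscosity $\varepsilon\triangle\mathbf{u}$ was used to close an estimate must now be replaced by the combination $\mu(\vartheta_\varepsilon)\geq\underline\mu$ plus Korn's inequality in $H^1_0(\Omega)$, and one must be careful to select the $\psi$'s and $h$'s in the admissible classes \eqref{2.1100} and \eqref{2.10}--\eqref{2.01} so that the passage of tests is justified in the sense of distributions.
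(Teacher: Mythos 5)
Your proposal matches the paper's argument: the key ingredient is the $\varepsilon$-- and $\delta$--independent temperature floor from Proposition~\ref{6.1.}, which via \eqref{1.32} yields $\mu(\vartheta_\varepsilon)\geq\underline{\mu}>0$, and from there the estimates are obtained exactly as in Proposition~\ref{4.2.}, with the artificial dissipation $\varepsilon\|\nabla\mathbf{u}_\varepsilon\|^2_{L^2}$ replaced by the genuine viscous dissipation $\underline{\mu}\|D(\mathbf{u}_\varepsilon)\|^2_{L^2}$ and the div-free Korn identity. One small bookkeeping remark for clarity: in \eqref{2.11} both $\mathbb{S}:\nabla\mathbf{u}$ and $\vartheta^3$ carry a prefactor $\delta$, so the resulting bounds in \eqref{6.55} and \eqref{6.57} scale like $\delta^{-1}$; this is entirely consistent with the claim (only $\varepsilon$--independence is asserted, $\delta$ is fixed), but is worth flagging since at the next stage, in Section~4, the paper must work harder (see \eqref{7.5}--\eqref{7.7}) to recover a $\delta$--uniform version of the dissipation bound.
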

Thanks to \eqref{6.54}-\eqref{6.59}, we are able to derive the same compactness structure for $(\varrho_{\varepsilon},\mathbf{u}_{\varepsilon},\vartheta_\varepsilon)$ as $(\varrho_n,\mathbf{u}_n, \vartheta_n)$. Thus, we can show the following proposition by passing to the limits as $\varepsilon\to0.$

\begin{proposition}\label{6.4.}
For fixed $\delta>0$, under the hypotheses of Theorem \ref{1.2.} and Proposition \ref{2.1.},
the initial-boundary value problem \eqref{1.1}-\eqref{1.6} with the parameter $\delta>0$ admits an approximate solution $(\varrho,\mathbf{u},\vartheta,P)$, which is also the limit of the weak solution to \eqref{1.50}-\eqref{1.55} when $\varepsilon\rightarrow0$,
satisfying
\begin{enumerate}[(i).]

\item the density $\varrho\geq0$ and the velocity $\mathbf{u}$ satisfy
\begin{equation*}
\begin{split}
&\varrho\in L^\infty((0,T)\times\Omega)\cap C([0,T];L^p(\Omega)),\, 1\leq p<\infty,\\
&\mathbf{u}\in L^2(0,T; H^1_0(\Omega)),\quad\sqrt{\varrho}\mathbf{u}\in L^\infty(0,T; L^2(\Omega)),
\end{split}
\end{equation*}
and the temperature $\vartheta\geq 0$ satisfies
\begin{equation*}
\vartheta \in L^2(0,T; H^{1}(\Omega))\cap L^3((0,T)\times\Omega),\quad \varrho\vartheta\in L^\infty(0,T; L^1(\Omega));
\end{equation*}

\item the equations (\ref{1.1}a)-(\ref{1.1}c) hold in $\mathcal{D}'((0,T)\times\Omega)$;

\item  $\vartheta(t,\cdot)\rightarrow \vartheta_{0,\delta}$,\quad $\varrho\mathbf{u}(t,\cdot)\rightarrow \mathbf{m}_{0,\delta}$ in $\mathcal{D}'(\Omega)$ as $t\rightarrow 0^+$;

\item the renormalized temperature inequality holds in the sense of distributions, that is,
\begin{equation}\label{6.67}
\begin{split}
&\int_0^T\int_\Omega (\delta+\varrho)H(\vartheta)\partial_t\varphi dxdt\\
&\quad+\int_0^T\int_\Omega \left(\varrho H(\vartheta)\mathbf{u}\cdot\nabla\varphi
+\mathcal{K}_h(\vartheta)\triangle\varphi-\delta\vartheta^3 h(\vartheta)\varphi\right) dxdt\\
&\leq\int_0^T\int_\Omega \left((\delta-1)\mathbb{S}:\nabla\mathbf{u}h(\vartheta)+h'(\vartheta)\kappa(\vartheta)|\nabla\vartheta|^2 \right)\varphi dxdt\\
&\quad-\int_\Omega(\delta+\varrho_{0,\delta})H(\vartheta_{0,\delta})\varphi(0)dx,
\end{split}
\end{equation}
for any $\varphi\in C_c^\infty([0,T]\times\Omega)$ satisfying
\begin{equation}\label{6.68}
\varphi\geq 0,\,\,\varphi(T,\cdot)=0,\,\,\nabla\varphi\cdot \mathbf{n}|_{\partial\Omega}=0,
\end{equation}
where $H(\vartheta)=\int_0^\vartheta h(z) dz$ and $\mathcal{K}_h(\vartheta)=\int_0^\vartheta \kappa(z)h(z) dz$,
with non-increasing $h\in C^2([0,\infty))$ satisfying
\begin{equation*}
0<h(0)<\infty,\,\,h\,\, \lim_{z\rightarrow\infty}h(z)=0.\\
\end{equation*}
and
\begin{equation*}
h''(z)h(z)\geq 2(h'(z))^2\,\,for \,\,all\,\,z\geq0;
\end{equation*}

\item the energy inequality holds, that is, for a.e. $t\in(0,T)$,
\begin{equation}\label{6.70}
\begin{split}
&\int_\Omega\left(\frac{1}{2}\varrho|\mathbf{u}|^2+(\delta+\varrho)\vartheta\right)(t) dx
+\delta\int_0^t\int_\Omega \mathbb{S}:\nabla\mathbf{u}+\vartheta^3dxds\\
&\leq \int_\Omega\frac{1}{2}\frac{|\mathbf{m}_{0,\delta}|^2}{\varrho_{0,\delta}}
+(\delta+\varrho_{0,\delta})\vartheta_{0,\delta} dx.
\end{split}
\end{equation}
\end{enumerate}
\end{proposition}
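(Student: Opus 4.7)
The plan is to repeat the $n\to\infty$ argument of Section 2 with $n$ replaced by $\varepsilon$, now taking advantage of the $\varepsilon$-independent estimates in Proposition \ref{6.3.}. The crucial new input is the lower bound $\mu(\vartheta_\varepsilon)\geq\underline{\mu}>0$ coming from Proposition \ref{6.1.} and \eqref{1.32}, which replaces the role of the artificial viscosity $\varepsilon\Delta\mathbf{u}$ in producing a uniform $L^2$ bound on $\nabla\mathbf{u}_\varepsilon$. From \eqref{6.54}--\eqref{6.59} one extracts, along a subsequence, $\varrho_\varepsilon\to\varrho$ in $C([0,T];L^p(\Omega))$ via Lemma \ref{5.1.}, $\mathbf{u}_\varepsilon\rightharpoonup\mathbf{u}$ weakly in $L^2(0,T;H^1_0(\Omega))$, and $\vartheta_\varepsilon\rightharpoonup\vartheta$ weakly in $L^2(0,T;H^1(\Omega))\cap L^3((0,T)\times\Omega)$. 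Applying Lemma \ref{5.2.} to $(\delta+\varrho_\varepsilon)\vartheta_\varepsilon$ exactly as in \eqref{5.25}--\eqref{5.35} yields the strong convergence $\vartheta_\varepsilon\to\vartheta$ in $L^2((0,T)\times\Omega)$, and the Lipschitz property of $\mu$ then gives $\mu(\vartheta_\varepsilon)\to\mu(\vartheta)$ strongly in $L^2$.

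With these convergences, the passage to the limit in the continuity equation, the divergence-free constraint, and the initial data is routine. The convective term $\varrho_\varepsilon\mathbf{u}_\varepsilon\otimes\mathbf{u}_\varepsilon$ is handled by a second application of Lemma \ref{5.2.} to $\varrho_\varepsilon\mathbf{u}_\varepsilon$, giving the analogue of \eqref{5.41}--\eqref{5.42}, while the viscous stress $\mu(\vartheta_\varepsilon)(\nabla\mathbf{u}_\varepsilon+\nabla^{\mathbf{t}}\mathbf{u}_\varepsilon)$ converges to $\mu(\vartheta)(\nabla\mathbf{u}+\nabla^{\mathbf{t}}\mathbf{u})$ in $\mathcal{D}'$ by combining the weak convergence of the symmetric gradient with the strong $L^2$ convergence of $\mu(\vartheta_\varepsilon)$. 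The only genuinely new feature is the artificial term $-\varepsilon\Delta\mathbf{u}_\varepsilon$, which vanishes in $\mathcal{D}'((0,T)\times\Omega)$ because $\nabla\mathbf{u}_\varepsilon$ is bounded in $L^2$ uniformly in $\varepsilon$. This delivers (\ref{1.1}a)--(\ref{1.1}c) in the sense of distributions.

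The main obstacle, as anticipated in Remark \ref{1.3.}, is the quadratic dissipation $\mathbb{S}_\varepsilon:\nabla\mathbf{u}_\varepsilon$ appearing in both the renormalized temperature inequality \eqref{2.8} and the energy inequality \eqref{2.11}: only weak convergence of $\nabla\mathbf{u}_\varepsilon$ is available, so equality cannot be recovered. For \eqref{2.8}, the terms linear in the renormalised temperature together with $\mathcal{K}_h(\vartheta_\varepsilon)$ and $\delta\vartheta_\varepsilon^3h(\vartheta_\varepsilon)$ are treated as in \eqref{5.47}--\eqref{5.54} via Proposition 2.1 of \cite{Feireisl Dynamics 2004}, and the gradient dissipation $h'(\vartheta_\varepsilon)\kappa(\vartheta_\varepsilon)|\nabla\vartheta_\varepsilon|^2$ is handled by the lower semicontinuity \eqref{5.55} using the non-increasing property of $h$. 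The critical term $\mathbb{S}_\varepsilon:\nabla\mathbf{u}_\varepsilon\,h(\vartheta_\varepsilon)$ is controlled by Lemma \ref{5-1}, which gives
\begin{equation*}
\int_\Omega \mathbb{S}:\nabla\mathbf{u}\,h(\vartheta)\,\varphi\,dx\leq\liminf_{\varepsilon\to0}\int_\Omega \mathbb{S}_\varepsilon:\nabla\mathbf{u}_\varepsilon\,h(\vartheta_\varepsilon)\,\varphi\,dx
\end{equation*}
for every nonnegative test function $\varphi$; since the coefficient $(\delta-1)$ multiplying this term is negative, the direction of the inequality is preserved upon taking $\liminf$, yielding \eqref{6.67}. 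For the energy inequality \eqref{6.70}, I would choose $\psi$ in \eqref{2.11} as a smooth approximation of $\mathbf{1}_{[0,t]}$; the convexity of the integrands $\varrho|\mathbf{u}|^2$, $\mathbb{S}:\nabla\mathbf{u}$ and $\vartheta^3$ in the weakly convergent variables (together with the almost-everywhere subsequence extracted from the strong $L^2$ convergence of $\vartheta_\varepsilon$) makes every term on the left lower-semicontinuous, while the nonnegative $\varepsilon|\nabla\mathbf{u}_\varepsilon|^2$ is simply dropped. The right-hand side converges to the expected initial energy by the strong convergence of the regularised data in \eqref{1.56}, completing the proof of Proposition \ref{6.4.}.
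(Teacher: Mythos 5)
Your proposal is correct and follows exactly the route the paper itself takes: the paper says only that \eqref{6.54}--\eqref{6.59} give the same compactness structure for $(\varrho_\varepsilon,\mathbf{u}_\varepsilon,\vartheta_\varepsilon)$ as for $(\varrho_n,\mathbf{u}_n,\vartheta_n)$, and then invokes the $n\to\infty$ argument of Section 2 verbatim. You have filled in the details consistently — in particular the vanishing of $\varepsilon\Delta\mathbf{u}_\varepsilon$ from the now $\varepsilon$-uniform $L^2$ gradient bound, and the sign considerations on the $(\delta-1)$ coefficient when applying Lemma \ref{5-1} — all of which matches the paper's intent.
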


\

\section{{\bf Limit passage for $\delta$ tends to zero}}\

The final step is to recover a weak solution to the initial-boundary value problem \eqref{1.1}-\eqref{1.6} by passing to the limit as $\delta\to 0$.
In this section, we use $(\varrho_\delta, \mathbf{u}_\delta, \vartheta_\delta)$ to denote the weak solutions constructed in Proposition \ref{6.4.}.
Note that the positive below bound of the temperature in Proposition \ref{6.1.} does not depend on $\delta>0$ , so we have
\begin{equation}\label{7.00}
\mu(\vartheta_\delta)\geq \underline{\mu}>0
\end{equation}
for some positive constant $\underline{\mu}$ independent of $\delta>0$ in this whole section.

\

\subsection{\bf Estimates independent of $\delta>0$}\

Observe that estimates for $\varrho_\delta$ are similar as in the previous sections, and estimates for $\mathbf{u}_\delta$ can be deduced after some calculations, thus our main task in this section is to deal with
terms related to $\vartheta_\delta$.
For convenience, in the rest of this section, we denote $C$ a generic positive constant independent of $\delta>0$.

First, by \eqref{initial} and the energy inequality \eqref{6.70}, we have the following estimates
\begin{equation}\label{7.1}
\|\sqrt{\varrho_\delta}\mathbf{u}_\delta\|_{L^\infty(0,T; L^2(\Omega))}\leq C,
\end{equation}
\begin{equation}\label{7.2}
\|(\delta+\varrho_\delta)\vartheta_\delta\|_{L^\infty(0,T; L^1(\Omega))}\leq C,
\end{equation}
\begin{equation}\label{7.3}
\delta \int_0^T\int_\Omega \mathbb{S}_\delta:\nabla\mathbf{u}_\delta dxdt\leq C,
\end{equation}
\begin{equation}\label{7.4}
\delta \int_0^T\int_\Omega \vartheta_\delta^3 dxdt\leq C.
\end{equation}

Then, taking $\varphi(t,x)=\psi(t)$ satisfying $0\leq\psi\leq1$, $\psi\in C_c^\infty(0,T)$ and $h(\vartheta)=\frac{1}{(1+\vartheta)^l}$ with $0<l<1$ in \eqref{6.67}, we have
\begin{equation}\label{7.5}
\begin{split}
&\int_0^T\int_\Omega \left(\frac{1-\delta}{(1+\vartheta_\delta)^l}\mathbb{S}_\delta:\nabla\mathbf{u}_\delta
+l\frac{\kappa(\vartheta_\delta)}{(1+\vartheta_\delta)^{l+1}}|\nabla\vartheta_\delta|^2\right)\psi dxdt\\
&\leq \delta\int_0^T\int_\Omega \frac{\vartheta_\delta^{3}}{(1+\vartheta_\delta)^l}\psi dxdt
-\int_0^T\int_\Omega (\delta+\varrho_{\delta})H(\vartheta_{\delta})\partial_t\psi dxdt,
\end{split}
\end{equation}
with $H(\vartheta)=\int_0^\vartheta \frac{1}{(1+z)^l} dz$, which combined with estimates \eqref{7.2}-\eqref{7.4} implies
\begin{equation}\label{7.6}
\begin{split}
\int_0^T\int_\Omega \left(\frac{\mathbb{S}_\delta:\nabla\mathbf{u}_\delta}{(1+\vartheta_\delta)^l}
+l\frac{\kappa(\vartheta_\delta)}{(1+\vartheta_\delta)^{l+1}}|\nabla\vartheta_\delta|^2\right)\psi dxdt
\leq C
\end{split}
\end{equation}
for some constant $C$ independent of $\delta>0$.
Letting $l\rightarrow 0$ in \eqref{7.6}, we obtain
\begin{equation}\label{7.7}
\int_0^T\int_\Omega\mathbb{S}_\delta:\nabla\mathbf{u}_\delta dxdt\leq C.
\end{equation}
This with help of \eqref{7.00} and Poincar\'{e}'s inequality, yields
\begin{equation}\label{7.007}
\|\mathbf{u}_\delta\|_{L^2(0,T; H^{1}_0(\Omega))}\leq C.
\end{equation}
In addition, for fixed $0<l<1$, by virtue of \eqref{7.6} and the growth restriction imposed on $\kappa(\vartheta)$ \eqref{1.33}, we obtain
\begin{equation}\label{7.1000}
\|\nabla\vartheta_\delta^{\frac{3-l}{2}}\|_{L^2((0,T)\times\Omega)}\leq C(l),
\end{equation}
together with
\begin{equation}\label{7.100}
\|\nabla\vartheta_\delta\|_{L^2((0,T)\times\Omega)}\leq C(l),
\end{equation}
with the constant $C(l)$ depending on $l\in(0,1)$.
Thanks to Lemma \ref{6.20.}, estimates \eqref{7.2} and \eqref{7.100} yield
\begin{equation}\label{7.101}
\|\vartheta_\delta\|_{L^2(0,T;H^1(\Omega))}\leq C(l).
\end{equation}
Bootstraping \eqref{7.1000} and \eqref{7.101}, we deduce
\begin{equation}\label{7.10}
\|\vartheta_\delta^{\frac{3-l}{2}}\|_{L^2(0,T;H^1(\Omega))}\leq C(l).
\end{equation}
Combining \eqref{7.2} with \eqref{7.10} and thanks to the interpolation inequality, we deduce for a certain $p>1$ and a small positive number $\omega$
\begin{equation}\label{7.11}
\vartheta_\delta^3\,\,{\rm is \,\,bounded \,\,in}\,\,L^p\left(\{\varrho_\delta(t,x)\geq\omega>0\}\right)
\end{equation}
by a positive constant independent of $\delta>0$.

Putting all estimates independent of $\delta>0$ together, we have the following result.
\begin{proposition}\label{7.1.}
Under the hypotheses of Theorem \ref{1.2.} and Proposition \ref{2.1.}, we have
\begin{equation}\label{7.12}
\|\varrho_\delta\|_{L^\infty((0,T)\times\Omega)}\leq C,
\end{equation}
\begin{equation}\label{7.13}
\|\mathbf{u}_\delta\|_{L^2(0,T; H^{1}_0(\Omega))}\leq C,
\end{equation}
\begin{equation}\label{7.14}
\|\vartheta_\delta^{\frac{3-l}{2}}\|_{L^2(0,T; H^{1}(\Omega))}\leq C(l),\,\,{\rm with \,\,}l\in(0,1],
\end{equation}
\begin{equation}\label{7.15}
\|\vartheta_\delta^3\|_{L^p\left(\{\varrho_\delta(t,x)\geq\omega>0\}\right)}\leq C, \,\,{\rm for \,\,some\,\,}p>1.
\end{equation}
\end{proposition}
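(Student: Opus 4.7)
The plan is to translate the renormalized temperature inequality \eqref{6.67} and the energy inequality \eqref{6.70} of Proposition~\ref{6.4.}, together with the $\delta$-independent lower bound $\mu(\vartheta_\delta)\geq\underline{\mu}>0$ inherited from Proposition~\ref{6.1.} and hypothesis \eqref{1.32}, into $\delta$-uniform norms on $(\varrho_\delta,\mathbf{u}_\delta,\vartheta_\delta)$. Since the initial data \eqref{1.56} were designed so that $E_\delta(0)$ is bounded uniformly in $\delta$, \eqref{6.70} directly supplies \eqref{7.1}--\eqref{7.4}. The $L^\infty$ bound \eqref{7.12} on the density is then a direct consequence of Lemma~\ref{5.1.} applied to $(\varrho_\delta,\mathbf{u}_\delta)$, using $\mathrm{div}\,\mathbf{u}_\delta=0$ and $0<\delta\leq\varrho_{0,\delta}\leq\bar{\varrho}$.

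For the velocity bound \eqref{7.13} I would feed the test function $h(z)=(1+z)^{-l}$ with $0<l<1$ into \eqref{6.67}, together with a cutoff $\varphi(t,x)=\psi(t)$, $\psi\in C_c^\infty(0,T)$, $0\leq\psi\leq 1$. The choice $h(z)=(1+z)^{-l}$ meets \eqref{2.10}--\eqref{2.01}, because $l(l+1)\geq 2l^2$ precisely when $l\leq 1$. The resulting identity is \eqref{7.5}, whose right-hand side is $\delta$-uniformly controlled via \eqref{7.2} and \eqref{7.4}; this yields the weighted dissipation bound \eqref{7.6}. Letting $l\to 0^+$ in \eqref{7.6} converts the weighted dissipation into a plain $\delta$-uniform estimate on $\int\mathbb{S}_\delta:\nabla\mathbf{u}_\delta$, and combining it with $\mu(\vartheta_\delta)\geq\underline{\mu}$, Korn's inequality, and Poincar\'e (since $\mathbf{u}_\delta\in H_0^1$) produces \eqref{7.13}.

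For the temperature bound \eqref{7.14} I keep $l\in(0,1)$ fixed in \eqref{7.6} and exploit the growth assumption \eqref{1.33} on $\kappa$. Writing $|\nabla\vartheta_\delta^{(3-l)/2}|^2=\tfrac{(3-l)^2}{4}\vartheta_\delta^{1-l}|\nabla\vartheta_\delta|^2$ and using $\kappa(\vartheta_\delta)/(1+\vartheta_\delta)^{l+1}\gtrsim\vartheta_\delta^{1-l}$ for large $\vartheta_\delta$, the gradient term on the left of \eqref{7.6} dominates $\int|\nabla\vartheta_\delta^{(3-l)/2}|^2$, giving \eqref{7.1000}. Similarly, since $\kappa(\vartheta_\delta)\geq\underline{\kappa}(1+\vartheta_\delta^2)$, the same term controls $\int|\nabla\vartheta_\delta|^2$, yielding \eqref{7.100}. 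Lemma~\ref{6.20.} applied to $v=\vartheta_\delta$, with the mass lower bound on $\varrho_\delta$ and the $L^1$ bound on $\varrho_\delta\vartheta_\delta$ from \eqref{7.2}, upgrades the gradient bound to the full $H^1$ estimate \eqref{7.101}; a short bootstrap combining \eqref{7.1000} and \eqref{7.101} then gives \eqref{7.14}.

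Finally, \eqref{7.15} follows by interpolation. The Sobolev embedding $H^1\hookrightarrow L^6$ applied to $\vartheta_\delta^{(3-l)/2}$ provides space--time integrability $\vartheta_\delta\in L^{3-l}(0,T;L^{3(3-l)}(\Omega))$, while on $\{\varrho_\delta\geq\omega\}$ the $L^\infty_tL^1_x$ bound on $\varrho_\delta\vartheta_\delta$ gives a localized $L^\infty_tL^1_x$ bound on $\vartheta_\delta$; interpolating these estimates produces $\vartheta_\delta^3\in L^p$ for some $p>1$, provided $l$ is chosen sufficiently small. The step I expect to be most delicate is the passage $l\to 0^+$ in the renormalized inequality for the velocity bound: this is what decouples the plain dissipation estimate from the vanishing factor $\delta$ appearing in \eqref{6.70}, and it is precisely this decoupling that will preserve compactness of $\mathbf{u}_\delta$ as $\delta\to 0$ in the next section.
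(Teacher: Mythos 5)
Your proof follows the paper's argument essentially line by line: the energy inequality \eqref{6.70} with $\delta$-uniformly bounded regularized initial energy yields \eqref{7.1}--\eqref{7.4}, the renormalized inequality \eqref{6.67} with $h(z)=(1+z)^{-l}$ and $\varphi=\psi(t)$ gives the weighted dissipation bound \eqref{7.6}, letting $l\to 0^+$ delivers \eqref{7.7} and hence \eqref{7.13}, and for fixed $l\in(0,1)$ the $\kappa$-growth condition \eqref{1.33} plus Lemma~\ref{6.20.} and a bootstrap give \eqref{7.14}, with \eqref{7.15} by interpolation; this is the paper's route. Your remark that \eqref{2.01} holds for $h(z)=(1+z)^{-l}$ exactly when $l\leq 1$, and your explicit mention of Korn's inequality (which is in fact not even needed since $\|\nabla\mathbf{u}\|_{L^2}^2=2\|D(\mathbf{u})\|_{L^2}^2$ for divergence-free $\mathbf{u}\in H^1_0$), are correct small elaborations that the paper leaves implicit.
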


By virtue of \eqref{7.12} and \eqref{7.13}, we can obtain the same compactness result for $(\varrho_\delta,\mathbf{u}_\delta)$ as $(\varrho_n,\mathbf{u}_n)$. Thus, in the rest of this section, we focus our attention on the compactness of $\vartheta_\delta$.

\

\subsection{\bf Strong convergence of the temperature $\vartheta_\delta$}\

Similarly as the process in obtaining the strong convergence of $\vartheta_n$, the main difficulty is to prove
\begin{equation}\label{7.20}
(\delta+\varrho_\delta)\vartheta_\delta\rightarrow \varrho \overline{\vartheta}
\,\,{\rm in}\,\,L^2(0,T; H^{-1}(\Omega)), \,\,{\rm as}\,\,\delta\rightarrow0.
\end{equation}
In order to obtain \eqref{7.20}, by virtue of Lemma \ref{5.2.} and estimates \eqref{7.12}-\eqref{7.15}, it suffices to prove
\begin{equation}\label{7.22}
\vartheta_\delta\,\,{\rm is \,\,bounded \,\,in \,\,}L^{3}\left(\{\varrho_\delta(x,t)<\omega\}\right)
\end{equation}
by a positive constant independent of $\delta>0$, with $\omega$ being a sufficiently small positive number.

As in \cite{Feireisl Dynamics 2004, Feireisl On the motion 2004}, \eqref{7.22} can be achieved by taking the test function
\begin{equation*}
\varphi(t,x)=\psi(t)(\eta(t,x)-\underline{\eta}),\,\,0\leq\psi\leq1,\,\,\psi\in C_c^\infty(0,T),
\end{equation*}
in \eqref{6.67},
where
\[\underline{\eta}=\inf_{t\in[0,T],x\in\Omega}\eta,\]
and for each $t\in[0,T]$, $\eta=\eta_\delta$ is the unique solution of the following Neumann problem
\begin{equation*}
\begin{cases}
\triangle\eta_\delta(t)=B(\varrho_\delta(t))-\frac{1}{|\Omega|}\int_\Omega B(\varrho_\delta(t))dx\,\,{\rm in}\,\,\Omega,\\
\nabla\varrho_\delta\cdot\mathbf{n}=0\,\,{\rm on}\,\,\partial\Omega,\\
\int_\Omega\eta_\delta(t)dx=0,
\end{cases}
\end{equation*}
with $B\in C^\infty(\mathbb{R})$ non-increasing satisfying
\begin{equation*}
B(z)=
\begin{cases}
0, & {\rm if}\,\, z\leq \omega,\\
-1, & {\rm if}\,\, z\geq 2\omega.
\end{cases}
\end{equation*}
Since this process is similar as that in \cite{Feireisl Dynamics 2004, Feireisl On the motion 2004}, thus we omit the details here.

Combining \eqref{7.14} and \eqref{7.20}, we have
\begin{equation}\label{7.26}
\vartheta_\delta\rightarrow\overline{\vartheta}\,\,{\rm in}\,\,L^2(\{\varrho>0\}).
\end{equation}

Taking \eqref{7.12}-\eqref{7.26} into account, we can perform the limit $\delta\rightarrow0$ as in Section 2.
For convenience, we only give the details of the limit passage in the renormalized temperature inequality.

\

\subsection{\bf Limit in the renormalized temperature inequality \eqref{6.67}}\

First, for fixed $h$, passing to the limit $\delta\rightarrow0$ in the same way as in Section 2 for the renormalized temperature inequality \eqref{6.67}, we obtain
\begin{equation}\label{7.30}
\begin{split}
&\int_0^T\int_\Omega \left(\varrho H(\bar{\vartheta})\partial_t\varphi
+\varrho\mathbf{u} H(\bar{\vartheta})\cdot\nabla\varphi
+\overline{\mathcal{K}_h(\vartheta)}\triangle\varphi \right) dxdt\\
&\leq -\int_0^T\int_\Omega h(\bar{\vartheta})\mathbb{S}:\nabla\mathbf{u} \varphi dxdt
-\int_\Omega \varrho_{0}H(\vartheta_{0})\varphi(0)dx,
\end{split}
\end{equation}
where
\begin{equation}\label{7.31}
\varrho\overline{\mathcal{K}_h(\vartheta)}=\varrho\mathcal{K}_h(\bar{\vartheta}),
\end{equation}
and
\begin{equation}\label{7.32}
\varrho\mathbb{S}=\varrho\mu(\bar{\vartheta})(\nabla\mathbf{u}+\nabla^{\mathbf{t}}\mathbf{u}).
\end{equation}

Next, taking
\begin{equation}\label{7.33}
h(\vartheta)=\frac{1}{(1+\vartheta)^l},\,\,0<l<1
\end{equation}
in \eqref{7.30}, letting $l\rightarrow 0$ and using the monotone convergence theorem, we have
\begin{equation}\label{7.34}
\begin{split}
&\int_0^T\int_\Omega \left(\varrho \bar{\vartheta}\partial_t\varphi
+\varrho\mathbf{u} \bar{\vartheta}\cdot\nabla\varphi
+\overline{\mathcal{K}(\vartheta)}\triangle\varphi \right) dxdt\\
&\leq -\int_0^T\int_\Omega \mathbb{S}:\nabla\mathbf{u} \varphi dxdt-\int_\Omega \varrho_{0} \vartheta_{0}\varphi(0)dx,
\end{split}
\end{equation}
where
\begin{equation}\label{7.35}
\varrho\overline{\mathcal{K}(\vartheta)}=\varrho\mathcal{K}(\bar{\vartheta}),
\end{equation}
and
\begin{equation}\label{7.36}
\varrho\mathbb{S}=\varrho\mu(\bar{\vartheta})(\nabla\mathbf{u}+\nabla^{\mathbf{t}}\mathbf{u}).
\end{equation}

Finally, denote
\begin{equation}\label{7.37}
\vartheta=\mathcal{K}^{-1}\left(\overline{\mathcal{K}(\vartheta)}\right).
\end{equation}
We observe the new function $\vartheta$ satisfies
\begin{equation}\label{7.39}
\varrho\bar{\vartheta}=\varrho\vartheta\,\,a.e.\,\,{\rm in}\,\,(0,T)\times\Omega.
\end{equation}
Therefore, we have
\begin{equation}\label{7.40}
\begin{split}
&\int_0^T\int_\Omega \left(\varrho \vartheta\partial_t\varphi
+\varrho\mathbf{u} \vartheta\cdot\nabla\varphi
+\mathcal{K}(\vartheta)\triangle\varphi \right) dxdt\\
&\leq -\int_0^T\int_\Omega \mathbb{S}:\nabla\mathbf{u} \varphi dxdt-\int_\Omega \varrho_{0} \vartheta_{0}\varphi(0)dx,
\end{split}
\end{equation}
with
\begin{equation}\label{7.36}
\varrho\mathbb{S}=\varrho\mu(\vartheta)(\nabla\mathbf{u}+\nabla^{\mathbf{t}}\mathbf{u}),
\end{equation}
which is exact the temperature inequality \eqref{1.16} in Definition \ref{1.1.}.

\


\end{document}